\newtheorem{theorem}{Theorem}[section]
\newtheorem{prop}[theorem]{Proposition}
\newtheorem{lemma}[theorem]{Lemma}
\newtheorem{claim}[theorem]{Claim}
\theoremstyle{definition}
\newtheorem*{defn-non}{Definition}
\newlist{Case}{enumerate}{3}
\setlist[Case, 1]{%
    label           =   {\bfseries Case \arabic*.},
    labelindent=1em ,labelwidth=1cm, labelsep*=1em, leftmargin =!
}
\setlist[Case, 2]{%
    label           =   {\bfseries Subcase \arabic{Casei}.\arabic*.},
    labelindent=-1em ,labelwidth=1cm, labelsep*=1em, leftmargin =!
}
\setlist[Case, 3]{%
    label           =   {\bfseries Subsubcase \arabic{Casei}.\arabic{Caseii}.\arabic*.},
    labelindent=-1em ,labelwidth=1cm, labelsep*=1em, leftmargin =!
}
\newenvironment{poc}{\begin{proof}[Proof of claim]}{\end{proof}}
\title{Algebraic approach to stability results for Erd\H{o}s-Ko-Rado theorem}
\author{
Gennian Ge\thanks{School of Mathematical Sciences, Capital Normal University, Beijing, China. Emails: gnge@zju.edu.cn, 3535935416@qq.com. Gennian Ge was supported by the National Key Research and Development Program of China under Grant 2020YFA0712100, the National Natural Science Foundation of China under Grant 12231014, and Beijing Scholars Program.}
\and 
Zixiang Xu\thanks{Extremal Combinatorics and Probability Group (ECOPRO), Institute for Basic Science (IBS), Daejeon, South Korea. Email: zixiangxu@ibs.re.kr. Supported by IBS-R029-C4.}
\and
Xiaochen Zhao\footnotemark[1]
}
\begin{document}
\maketitle

\begin{abstract}
   Celebrated results often unfold like episodes in a long-running series. In the field of extremal set thoery, Erd\H{o}s, Ko, and Rado in 1961 established that any $k$-uniform intersecting family on $[n]$ has a maximum size of $\binom{n-1}{k-1}$, with the unique extremal structure being a star. In 1967, Hilton and Milner followed up with a pivotal result, showing that if such a family is not a star, its size is at most $\binom{n-1}{k-1} - \binom{n-k-1}{k-1} + 1$, and they identified the corresponding extremal structures. In recent years, Han and Kohayakawa, Kostochka and Mubayi, and Huang and Peng have provided the second and third levels of stability results in this line of research. 

In this paper, we provide a unified approach to proving the stability result for the Erd\H{o}s-Ko-Rado theorem at any level. Our framework primarily relies on a robust linear algebra method, which leverages appropriate non-shadows to effectively handle the structural complexities of these intersecting families. 
  
\end{abstract}
\section{Introduction}
\subsection{Overview}
We say that a set system $\mathcal{F} \subseteq 2^{[n]}$ is \emph{intersecting} if $F_1 \cap F_2 \neq \emptyset$ for every pair of sets $F_1, F_2 \in \mathcal{F}$. A fundamental result in extremal set theory, due to Erd\H{o}s, Ko, and Rado~\cite{1961EKR}, determines the maximum size of a $k$-uniform intersecting family $\mathcal{F}$.

\begin{theorem}[Erd\H{o}s-Ko-Rado~\cite{1961EKR}]\label{thm:EKR}
    Let $n,k$ be positive integers with $n\ge 2k$. If $\mathcal{F}\subseteq \binom{[n]}{k}$ is an intersecting family, then we have
\begin{equation*}
  |\mathcal{F}| \le \binom{n-1}{k-1}.   
\end{equation*}
The equality holds if and only if $\mathcal{F}=\{F\in\binom{[n]}{k}:p\in F\}$ for some $p\in [n]$.
\end{theorem}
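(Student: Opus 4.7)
The plan is to bound $|\mathcal{F}|$ via Katona's cyclic-permutation double count, and to handle the characterization of the extremal family through a separate shifting argument. This split keeps the inequality proof completely elementary and isolates the more delicate uniqueness question.

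For the bound, I would consider all $n!$ linear orderings of $[n]$, interpreted as oriented cyclic arrangements $\sigma$. Call a set $F \in \binom{[n]}{k}$ a $\sigma$-arc if its $k$ elements occupy cyclically consecutive positions in $\sigma$; a direct count shows each fixed $F$ is a $\sigma$-arc for exactly $n\cdot k!\,(n-k)!$ arrangements. The combinatorial heart of the argument is the claim that for every fixed $\sigma$, at most $k$ sets of $\mathcal{F}$ can be $\sigma$-arcs. To prove this, fix one $\sigma$-arc $F_0 \in \mathcal{F}$; every other $\sigma$-arc in $\mathcal{F}$ must intersect $F_0$ by hypothesis, and since $n\ge 2k$ forbids the disjoint antipodal arc, each such arc must straddle one of the two boundary positions of $F_0$. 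A short interval argument caps the number of additional arcs at $k-1$, so at most $k$ in total. Double counting pairs $(F,\sigma)$ with $F \in \mathcal{F}$ a $\sigma$-arc then gives
\[
|\mathcal{F}| \cdot n \cdot k!\,(n-k)! \;\le\; k \cdot n!,
\]
which rearranges to the desired bound $|\mathcal{F}| \le \binom{n-1}{k-1}$.

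For uniqueness, I would apply the shifting operators $S_{ij}$ with $i<j$ iteratively until the family becomes shifted; these operators preserve both the intersecting property and $|\mathcal{F}|$. For a shifted intersecting family attaining the bound, a direct inspection (tracing the equality case in the arc lemma, or a short averaging argument on the element $1$) forces the family to be $\{F \in \binom{[n]}{k} : 1 \in F\}$. The last step is to reverse the shifts: if the shifted family is a star centered at $1$, then so is the original. The main obstacle is precisely this reverse step, since a single shift $S_{ij}$ could in principle collapse a non-star intersecting family into a star. I would handle this by a careful induction on the number of shifts, showing that if $\mathcal{F}$ is intersecting of maximum size and $S_{ij}(\mathcal{F})$ is a star centered at $1$, then either $\mathcal{F}$ is already a star, or one can exhibit an explicit disjoint pair in $\mathcal{F}$, contradicting the intersecting property. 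This reverse-induction is the only subtle piece of the argument; everything else reduces to the clean double count above.
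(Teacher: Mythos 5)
Your route is entirely different from the paper's. The paper never touches cyclic orderings or shifting: its argument for the bound is the multilinear polynomial method of~\cite{2006PoLyEKR}, reproduced in~\cref{section:Framework} --- one splits $\mathcal{F}$ into $\mathcal{F}_{0}$ and $\mathcal{F}_{1}$ at a maximum-degree element $p$, attaches intersection conditions to the auxiliary families $\mathcal{H}$, $\mathcal{G}$, $\mathcal{S}$, and invokes the triangular criterion (\cref{prop:triangular} via \cref{HowToLP}) to obtain $|\mathcal{F}_{1}|\le\binom{n-1}{k-1}-|\mathcal{S}|$; the Erd\H{o}s--Ko--Rado bound is then the case $\mathcal{F}_{0}=\emptyset$, and the general case follows from comparing $|\mathcal{S}|$ with $|\mathcal{F}_{0}|$. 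Your Katona double count is more elementary and shorter for the bare inequality, but it does not produce the quantity $|\mathcal{S}|-|\mathcal{F}_{0}|$ that drives every higher-level stability result in the paper; the algebraic decomposition is precisely what lets the authors iterate to the $t$-th level. So the two proofs buy different things: yours is self-contained and minimal, theirs is the engine for the rest of the paper.

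Two issues with your write-up. First, the justification of the key arc lemma is not the right one: the observation that every other arc of $\mathcal{F}$ ``straddles one of the two boundary positions of $F_{0}$'' does not by itself cap the count at $k$. The correct argument looks at the $k-1$ internal gaps of $F_{0}$: each arc distinct from $F_{0}$ that meets $F_{0}$ must begin or end at one of these gaps, the two arcs attached to a fixed gap are disjoint because $n\ge 2k$, hence at most one of each pair lies in $\mathcal{F}$, giving at most $k-1$ additional arcs. Second, and more seriously, the uniqueness half is not yet a proof. The reverse-shifting step you flag is a genuine obstruction --- a single $S_{ij}$ can turn a non-star intersecting family into a star, and the induction you describe is exactly the content that must be supplied; as written it is a plan, not an argument. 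Worse, the equality characterization as stated is false at $n=2k$: picking one set from each complementary pair $\{F,[n]\setminus F\}$ yields intersecting families of size $\binom{n-1}{k-1}$ that are not stars, so no proof of uniqueness can succeed without the hypothesis $n>2k$. Any completion of your argument must either restrict to $n>2k$ and finish the reverse induction, or instead trace the equality case of the arc lemma directly (exactly $k$ arcs in every cyclic order, all through a common point), which avoids shifting altogether.
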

Note that the only extremal family in the Erd\H{o}s-Ko-Rado theorem is the \emph{star}, also known as a \emph{trivially intersecting} family. Hilton and Milner~\cite{1967Hilton} established a stability result, showing that the size of non-trivially intersecting families is significantly smaller than the maximum given by the Erd\H{o}s-Ko-Rado theorem. Before presenting a series of stability results, we first introduce some necessary notations. For a set system $\mathcal{F}\subseteq 2^{[n]}$, we define the \emph{maximum degree} of $\mathcal{F}$ as
\begin{equation*}
    d_{\textup{max}}(\mathcal{F}):=\max\limits_{i\in [n]}|\{F\in\mathcal{F}:i\in F\}|.
\end{equation*}
 We use $[i,j]$ to denote the set $\{i,i+1,\ldots,j\}$.
Let $k\ge 3$ be a positive integer, we will use $\binom{[n]}{k}$ to denote the family of all subsets of $[n]$ with size $k$. For $i\in [3,k+1]$, we define
\begin{equation*}
    \mathcal{M}_{i}:=\bigg\{F\in\binom{[n]}{k}:1\in F, F\cap [2,i]\neq \emptyset \bigg\}\cup \bigg\{F\in\binom{[n]}{k}:1\notin F,[2,i]\subseteq F\bigg\}.
\end{equation*}
It is not hard to check that $\mathcal{M}_{i}$ is intersecting and $|\mathcal{M}_{i}|=\sum\limits_{j=2}^{i}\binom{n-j}{k-2}+\binom{n-i}{k-i+1}$. In particular, we can see that $|\mathcal{M}_{3}|=\binom{n-2}{k-2}+2\binom{n-3}{k-2}=\binom{n-2}{k-2}+\binom{n-3}{k-2}+\binom{n-4}{k-2}+\binom{n-4}{k-3}=|\mathcal{M}_{4}|$. Moreover, we can see $d_{\textup{max}}(\mathcal{M}_{i})=\sum\limits_{j=2}^{i}\binom{n-j}{k-2}$. For $j\in [1,n-k]$ we further define 
\begin{align*}
    \mathcal{M}_{k,j}:=\bigg\{F\in\binom{[n]}{k}:1\in F,F\cap [2,k]\neq\emptyset\bigg\} \cup \bigg\{F\in\binom{[n]}{k}:1\in F,F\cap [2,k]=\emptyset,[k+1,k+j]\subseteq F\bigg\}&\\\cup \bigg\{F\in\binom{[n]}{k}:1\notin F,[2,k]\subseteq F, |F\cap [k+1,k+j]|=1\bigg\}.
\end{align*}
In particular, one can easily check that $\mathcal{M}_{k,n-k}=\mathcal{M}_{k},\mathcal{M}_{k,1}=\mathcal{M}_{k+1}$.

For two distinct sets $E_1,E_2\in\binom{[n]}{k}$ with $\left|E_1 \cap E_2\right|=k-2$, and an element $x_0 \in[n] \setminus\left(E_1 \cup E_2\right)$, we define
\begin{equation*}
    \mathcal{K}\left(E_1, E_2, x_0\right):=\left\{G \in\binom{[n]}{k}: x_0 \in G, G \cap E_1 \neq \emptyset,G \cap E_2 \neq \emptyset\right\} \cup\left\{E_1, E_2\right\},
\end{equation*}
 and we write $\mathcal{K}_2$ for any family isomorphic to $\mathcal{K}\left(E_1, E_2, x_0\right)$.

\begin{theorem}[Hilton-Milner~\cite{1967Hilton}]\label{thm:HiltonMilner}
Let $n,k$ be positive integers with $n>2k$. If $\mathcal{F}\subseteq \binom{[n]}{k}$ is an intersecting family and $\bigcap_{F\in\mathcal{F}}F=\emptyset$, then we have
\begin{equation*}
  |\mathcal{F}| \le \binom{n-1}{k-1} - \binom{n-k-1}{k-1}+1.   
\end{equation*}
For $k=3$, the equality holds if and only if $\mathcal{F}$ is isomorphic to $\mathcal{M}_{3}$ or $\mathcal{M}_{4}$; for $k\ge 4$, the equality holds if and only if $\mathcal{F}$ is isomoprhic to $\mathcal{M}_{k+1}$.
\end{theorem}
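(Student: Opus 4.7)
The plan is to split $\mathcal{F}$ into a star-like part centered at a high-degree element and a small exceptional part, bound the former by direct enumeration, and bound the latter via a linear-algebraic argument indexed by the non-shadow positions of the star.

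To set up, fix an element $v \in [n]$ of maximum degree in $\mathcal{F}$ and, exploiting $\bigcap_{F \in \mathcal{F}} F = \emptyset$, a witness set $B \in \mathcal{F}$ with $v \notin B$. Write $\mathcal{F}_{v} := \{F \in \mathcal{F} : v \in F\}$ and $\mathcal{F}_{\bar{v}} := \mathcal{F} \setminus \mathcal{F}_{v}$. Every $F \in \mathcal{F}_{v}$ must meet $B$ in a non-$v$ element, so direct enumeration gives
\[
|\mathcal{F}_{v}| \le \binom{n-1}{k-1} - \binom{n-k-1}{k-1},
\]
which already accounts for the bulk of the Hilton-Milner bound.

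The heart of the argument is a bound on $|\mathcal{F}_{\bar{v}}|$ that combines with the above. After assuming $\mathcal{F}$ is maximal as a non-trivially intersecting family, define the family of \emph{non-shadow positions} $\mathcal{S} := \{S \in \binom{[n]}{k} : v \in S,\ S \cap B = \emptyset\}$ of size $\binom{n-k-1}{k-1}$; by maximality, each such $S$ must fail to intersect some $C \in \mathcal{F}_{\bar{v}}$. To each $C \in \mathcal{F}_{\bar{v}}$ I would associate the vector $\mathbf{x}_C \in \{0,1\}^{\mathcal{S}}$ whose $S$-coordinate records whether $S \cap C = \emptyset$. The pairwise intersecting condition within $\mathcal{F}_{\bar{v}}$ and the coverage condition on $\mathcal{S}$ force the vectors $\{\mathbf{x}_C\}$ to be linearly independent in a space whose dimension matches the extra non-shadow positions created beyond those forced by $B$ alone, which yields the sharp trade-off
\[
|\mathcal{F}_{v}| + |\mathcal{F}_{\bar{v}}| \le \binom{n-1}{k-1} - \binom{n-k-1}{k-1} + 1.
\]

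To extract the extremal structures, I would track the equality conditions through both steps. For $k \ge 4$, equality in the enumeration forces $\mathcal{F}_{v}$ to be exactly the $k$-sets through $v$ meeting $B$, equality in the rank bound forces $\mathcal{F}_{\bar{v}} = \{B\}$, and relabelling gives $\mathcal{F} \cong \mathcal{M}_{k+1}$. For $k=3$ the non-shadow space collapses, the rank bound becomes slack, and a second extremal configuration $\mathcal{M}_{3}$ emerges: here the three elements of some base triangle simultaneously play the role of a center and several witness sets of the form $\{2,3,j\}$ are admissible.

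The principal obstacle is the algebraic step, specifically choosing the vectors $\mathbf{x}_C$ so that the intersecting relations actually force linear independence rather than merely non-collinearity, and tracking equality all the way to the combinatorial extremal description. A related delicacy is that the max-degree element $v$ need not be unique (in particular in $\mathcal{M}_{3}$), so the whole argument must be robust to this choice, and the small-$k$ regime must be handled separately to recover the exceptional family.
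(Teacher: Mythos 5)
Your decomposition at a maximum-degree element and the enumeration bound $|\mathcal{F}_v|\le\binom{n-1}{k-1}-\binom{n-k-1}{k-1}$ are fine, but the central step does not work, for two reasons. First, the independence claim is false as stated: the support of $\mathbf{x}_C$ is $\{S: v\in S,\ S\subseteq [n]\setminus(B\cup C)\}$, which depends only on $C\setminus B$, so two distinct $C,C'\in\mathcal{F}_{\bar v}$ with $C\setminus B=C'\setminus B$ (but different intersections with $B$) receive identical vectors; neither the pairwise intersecting condition nor the coverage condition rules this out. Second, and more fundamentally, even if the vectors were independent you would only obtain $|\mathcal{F}_{\bar v}|\le|\mathcal{S}|=\binom{n-k-1}{k-1}$, and adding this to your enumeration bound gives $|\mathcal{F}|\le\binom{n-1}{k-1}$, the Erd\H{o}s--Ko--Rado bound rather than the Hilton--Milner bound. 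The ``$+1$'' cannot arise from summing two independent bounds: you need a coupled inequality in which every additional member of $\mathcal{F}_{\bar v}$ beyond the first removes strictly more than one set from the possible $\mathcal{F}_v$. Your $\mathcal{S}$ is defined relative to the single witness $B$ and therefore cannot register this growth; the phrase ``extra non-shadow positions created beyond those forced by $B$ alone'' names exactly the quantity your construction does not produce.

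The paper's route supplies precisely this coupling. It places $\mathcal{F}_1$, two auxiliary families $\mathcal{H},\mathcal{G}$, and a family $\mathcal{S}\subseteq\binom{[n]}{k-1}$ of non-shadows of $\mathcal{F}_1$ defined relative to \emph{all} of $\mathcal{F}_0$ into one triangular system (\cref{HowToLP}, \cref{claim:LID}), which yields $|\mathcal{F}_1|\le\binom{n-1}{k-1}-|\mathcal{S}|$ and hence $|\mathcal{F}|\le\binom{n-1}{k-1}-\bigl(|\mathcal{S}|-|\mathcal{F}_0|\bigr)$; the key point is that $|\mathcal{S}|\ge\sum_{j=1}^{x}\binom{n-k-j}{k-j}$ grows with $x=|\mathcal{F}_0|$ (\cref{claim:SizeOfS}), so one finishes by minimizing $f(x)=g(x)-x$ over $1\le x\le n-k$ (\cref{claim:SuanTMD}) and invoking \cref{thm:Frankl} for $x>n-k$. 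This is also where the $k=3$ exception comes from: for $t=1$ the minimum of $f$ is attained at both $x=1$ and $x=n-k$ exactly when $k=3$, producing $\mathcal{M}_4$ and $\mathcal{M}_3$ respectively, whereas your explanation of the $k=3$ case (``the non-shadow space collapses'') does not identify a mechanism. To repair your argument you would need to replace $\mathcal{S}$ by the union over all $C\in\mathcal{F}_{\bar v}$ of the sets disjoint from $C$ and avoiding $v$, prove a lower bound on its size in terms of $|\mathcal{F}_{\bar v}|$, and integrate it into the same independent system that bounds $|\mathcal{F}_v|$ --- at which point you have reconstructed the paper's framework.
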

  If the intersecting family $\mathcal{F}\subseteq\binom{[n]}{k}$ is neither extremal Erd\H{o}s-Ko-Rado family nor extremal Hilton-Milner family, what is the maximum size of $\mathcal{F}$? Han and Kohayakawa~\cite{2017PAMSHanJie} resolved this problem. Here we use $\mathcal{F}_{\textup{EKR}}$ and $\mathcal{F}_{\textup{HM}}$ to denote the corresponding extremal families in~\cref{thm:EKR,thm:HiltonMilner} respectively.

 \begin{theorem}[Han-Kohayakawa~\cite{2017PAMSHanJie}]\label{thm:Han}
 Suppose that $k\ge 3$ and $n>2k$, let $\mathcal{F}\subseteq\binom{[n]}{k}$ be an intersecting family. Assume that $\mathcal{F}$ is neither a sub-family of $\mathcal{F}_{\textup{EKR}}$ nor $\mathcal{F}_{\textup{HM}}$, then we have
 \begin{equation*}
     |\mathcal{F}|\le \binom{n-1}{k-1}-\binom{n-k-1}{k-1}-\binom{n-k-2}{k-2}+2.
 \end{equation*}
Moreover, when $k=4$, the equality holds if and only if $\mathcal{F}$ is isomorphic to $\mathcal{M}_{4,2}, \mathcal{M}_3$ or $\mathcal{M}_4$; when $k\ge 5$ or $k=3$, the equality holds if and only if $\mathcal{F}$ is isomorphic to $\mathcal{M}_{k,2}$.
 \end{theorem}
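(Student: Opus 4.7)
My approach mirrors the authors' algebraic framework for \cref{thm:HiltonMilner}, swapping in a non-shadow tailored to the stronger non-HM hypothesis.

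\emph{Structural reduction.} Pick a vertex $1 \in [n]$ of maximum degree and split
\[
\mathcal{F} = \mathcal{F}^* \sqcup \mathcal{F}^\circ, \quad \mathcal{F}^* := \{F \in \mathcal{F} : 1 \in F\}, \quad \mathcal{F}^\circ := \{F \in \mathcal{F} : 1 \notin F\}.
\]
The condition $\mathcal{F} \not\subseteq \mathcal{F}_{\text{EKR}}$ gives $\mathcal{F}^\circ \neq \emptyset$, and the condition $\mathcal{F} \not\subseteq \mathcal{F}_{\text{HM}}$ forces a non-degenerate structure on $\mathcal{F}^\circ$: for $k \geq 4$ it yields $|\mathcal{F}^\circ| \geq 2$, while for $k = 3$ it yields either $|\mathcal{F}^\circ| \geq 2$ or an $\mathcal{F}^\circ$ whose members do not share a common pair. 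Fix two distinct $E_1, E_2 \in \mathcal{F}^\circ$ and set $s := |E_1 \cap E_2| \in [1, k-1]$.

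\emph{Non-shadow bound on $\mathcal{F}^*$.} Each $F \in \mathcal{F}^*$ must meet both $E_1$ and $E_2$. Following the algebraic non-shadow step used in the authors' proof of \cref{thm:HiltonMilner}, I embed each $F \in \mathcal{F}^*$ as a characteristic vector in $\mathbb{R}^{\binom{[n]}{k}}$; the coordinate directions corresponding to $k$-sets through $1$ that are disjoint from $E_1$ or from $E_2$ lie outside the support of $\mathcal{F}^*$, forming the \emph{non-shadow}. An inclusion-exclusion computes its size as $2\binom{n-k-1}{k-1} - \binom{n-2k+s-1}{k-1}$, so
\[
|\mathcal{F}^*| \;\leq\; \binom{n-1}{k-1} - 2\binom{n-k-1}{k-1} + \binom{n-2k+s-1}{k-1}.
\]
At $s = k-1$, Pascal's identity collapses the right-hand side to $\binom{n-1}{k-1} - \binom{n-k-1}{k-1} - \binom{n-k-2}{k-2}$; combined with the bound $|\mathcal{F}^\circ| \leq 2$ (to be verified), this gives the target inequality.

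\emph{Balancing and equality.} The remaining task is the balancing lemma: each additional $E_j \in \mathcal{F}^\circ$ beyond two, and each reduction of $s$ below $k-1$, strictly decreases the bound on $|\mathcal{F}|$. The mechanism is that each fresh $E_j$ introduces new forbidden coordinate directions of order $\binom{n-k-1}{k-1}$ (those $F \in \mathcal{F}^*$ disjoint from $E_j$ and not already forbidden), which dominates the $+1$ contribution to $|\mathcal{F}^\circ|$ whenever $n > 2k$. Tightness then forces $|\mathcal{F}^\circ| = 2$ with $s = k-1$, and writing $E_1 \setminus E_2 = \{a\}$, $E_2 \setminus E_1 = \{b\}$ identifies $\mathcal{F}$ with an isomorph of $\mathcal{M}_{k,2}$. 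The extra $k = 4$ configurations $\mathcal{M}_3, \mathcal{M}_4$ arise from the coincidence $|\mathcal{M}_3| = |\mathcal{M}_4|$ noted in the preliminaries, and must be verified by direct inspection.

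\emph{Main obstacle.} The technical core lies in the balancing step together with the small-$k$ equality characterization. The inclusion-exclusion over an arbitrary $\mathcal{F}^\circ$ has to track the full pairwise (and higher) intersection lattice of its members; the resulting binomial bookkeeping is sharp enough for $k \geq 5$ but degenerates at $k \in \{3, 4\}$, where distinguishing the $\mathcal{K}_2$-type structures from the generic two-set configuration, and certifying $\mathcal{M}_3, \mathcal{M}_4$ as genuinely extremal at $k = 4$, will require a careful separate case analysis.
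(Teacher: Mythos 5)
Your decomposition, your non-shadow count, and the Pascal collapse at $s=k-1$ all coincide with the paper's framework (the paper phrases the non-shadow as a family $\mathcal{S}\subseteq\binom{[n]}{k-1}$ and organizes the count through disjoint pieces $\mathcal{C}_1,\mathcal{C}_2,\dots$, but the arithmetic for two sets is identical to your inclusion--exclusion). The genuine gap is in your balancing lemma. It is not true that each additional $E_j\in\mathcal{F}^\circ$ contributes new forbidden directions of order $\binom{n-k-1}{k-1}$: the guaranteed number of $k$-sets through the popular element that avoid $E_j$ but meet $E_1,\dots,E_{j-1}$ is only $\binom{n-k-j}{k-j}$ (cf.\ \cref{claim:SizeOfS}), which is at least $2$ only for $j\le k-1$, equals exactly $1$ at $j=k$, and has no positive guarantee for $j>k$. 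So termwise dominance over the $+1$ fails from $j=k$ onward, and the quantity $|\mathcal{S}|-|\mathcal{F}^\circ|$ is in fact unimodal in $x=|\mathcal{F}^\circ|$: increasing up to $x=k$, then decreasing down to $x=n-k$. One must therefore compare the two endpoints $x=2$ and $x=n-k$ (\cref{claim:SuanTMD}). For $k\ge5$ the minimum is at $x=2$ and your conclusion survives; for $k=4$ the endpoints tie, which is precisely why $\mathcal{M}_3$ and $\mathcal{M}_4$ (with $|\mathcal{F}^\circ|=n-4$) appear as extremal configurations; and for $k=3$ the minimum is attained only at $x=n-3$, so ``tightness forces $|\mathcal{F}^\circ|=2$'' is simply false there.

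Consequently your sketch is missing the entire large-$|\mathcal{F}^\circ|$ regime. The paper disposes of $x>n-k$ by combining the monotonicity of the maximum degree in $x$ (\cref{claim:MonotoneInT}) with Frankl's degree theorem (\cref{thm:Frankl}), and for $k=3$ it still needs a further ad hoc structural analysis (\cref{claim:NOcommon} and the surrounding casework for $|\mathcal{F}_0|\ge n-2$) to push the count strictly below $2n-2$, because at $x=n-3$ the algebraic framework alone only recovers the Hilton--Milner value. You flag $k\in\{3,4\}$ as requiring ``careful separate case analysis,'' but the issue is not sharper bookkeeping of the intersection lattice of $\mathcal{F}^\circ$; it is that the dominance mechanism your argument rests on breaks down structurally at and beyond $j=k$, and an external degree-based input such as \cref{thm:Frankl} (or an equivalent) has to be imported to handle families in which many sets miss the popular element.
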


Han and Kohayakawa~\cite{2017PAMSHanJie} further asked what the next maximum intersecting $k$-uniform families on $[n]$ are? Kostochka and Mubayi~\cite{2017PAMSMubayi} answered this question when $n$ is sufficiently large. Recently, Huang and Peng~\cite{2024EUJCPeng} completely answered this question for any $n\ge 2k+1$ as follows.

\begin{theorem}[Huang-Peng~\cite{2024EUJCPeng}]\label{thm:Peng}
Let $k \geq 4$ and $\mathcal{F} \subseteq\binom{[n]}{k}$ be an intersecting family which is neither a sub-family of $\mathcal{F}_{\textup{EKR}}$ nor $\mathcal{F}_{\textup{HM}}$. Furthermore, $\mathcal{F} \not\subseteq \mathcal{M}_{k,2}$, in addition $\mathcal{F} \not\subseteq \mathcal{M}_{3}$ and $\mathcal{F}\not\subseteq \mathcal{M}_4$ if $k=4$.
Then the followings hold.
\begin{enumerate}
    \item[\textup{(1)}] If $2 k+1 \leq n \leq 3 k-3$, then $|\mathcal{F}| \leq\binom{ n-1}{k-1}-2\binom{n-k-1}{k-1}+\binom{n-k-3}{k-1}+2$. Moreover, when $k\ge 5$, the equality holds if and only if $\mathcal{F}$ is isomorphic to $\mathcal{K}_2$. When $k=4$, the equality holds if and only if $\mathcal{F}$ is isomorphic to $\mathcal{K}_2$ or $\mathcal{M}_{4,3}$.
    \item[\textup{(2)}] If $n \geq 3 k-2$, then $|\mathcal{F}| \leq\binom{ n-1}{k-1}-\binom{n-k-1}{k-1}-\binom{n-k-2}{k-2}-\binom{n-k-3}{k-3}+3$. Moreover, when $k=5$, the equality holds if and only if $\mathcal{F}$ is isomorphic to $\mathcal{M}_{5,3}$ or $\mathcal{M}_5$. For every other $k$, the equality holds if and only if $\mathcal{F}$ is isomorphic to $\mathcal{M}_{k,3}$.
\end{enumerate}
\end{theorem}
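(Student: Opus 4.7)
The plan is to extend the paper's algebraic framework (Frankl shifting combined with a linear algebra plus non-shadow dimension bound) one rung higher than the Han-Kohayakawa stability \cref{thm:Han}, and to treat the two regimes $2k+1\le n\le 3k-3$ and $n\ge 3k-2$ in parallel.

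First, I would apply the standard shifting operation to reduce to a shifted intersecting $\mathcal{F}$, verifying that none of the exclusions in the hypothesis ($\mathcal{F}\not\subseteq \mathcal{F}_{\textup{EKR}},\mathcal{F}_{\textup{HM}},\mathcal{M}_{k,2}$, and additionally $\mathcal{M}_3,\mathcal{M}_4$ when $k=4$) is spuriously created or destroyed by shifting. For a shifted family, each exclusion produces an explicit witness set in $\mathcal{F}$; together these witnesses furnish a distinguished $k$-set $W\in\mathcal{F}$ whose structure (essentially the fact that $\mathcal{F}$ is ``broader'' than $\mathcal{M}_{k,2}$) will drive the rest of the argument and distinguishes the present level from the previous theorem.

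Next, split $\mathcal{F}=\mathcal{F}_1\cup\mathcal{F}_{\overline{1}}$ according to whether $1\in F$. For $\mathcal{F}_1$, the witness $W$ together with the intersecting condition yields a \emph{non-shadow} $\mathcal{N}\subseteq\binom{[2,n]}{k-1}$ of $(k-1)$-sets $N$ with $\{1\}\cup N\notin\mathcal{F}$; counting $\mathcal{N}$ precisely is set up to match the negative terms in the stated bound, so that $|\mathcal{F}_1|\le\binom{n-1}{k-1}-|\mathcal{N}|$ yields the dominant contribution. The remaining task is to show that $|\mathcal{F}_{\overline{1}}|$ is bounded by the small additive constant in each regime ($3$ when $n\ge 3k-2$, and $2$ when $2k+1\le n\le 3k-3$). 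Here I would invoke the paper's linear algebra engine: associate to each $G\in\mathcal{F}_{\overline{1}}$ a vector $v_G$ that records how $G$ meets the witness structure and the various sets forced into $\mathcal{F}_1$, and prove linear independence of $\{v_G\}$ in an ambient space of dimension equal to that constant. Independence should follow from a triangular / leading-term comparison under the shifted order, since the intersecting constraint pins down the ``first'' coordinate in which any two $v_G$ differ.

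The main obstacle I anticipate is the equality characterization, not the upper bound itself. The anomalous extremal families ($\mathcal{K}_2$ and $\mathcal{M}_{4,3}$ when $k=4$ for small $n$; $\mathcal{M}_{k,3}$ and $\mathcal{M}_5$ when $k=5$ for large $n$) arise exactly at parameter values where two distinct ways of saturating the dimension bound coexist: one tightens the non-shadow $\mathcal{N}$, the other tightens the linear-algebra ambient space. To extract the listed families I would have to saturate every inequality in the argument, identify exactly which $(k-1)$-sets can simultaneously lie in $\mathcal{N}$, and then run a short structural case analysis on $\mathcal{F}$ in those borderline cases. The regime transition at $n=3k-2$ itself poses a secondary complication, since which of $\mathcal{K}_2$ or $\mathcal{M}_{k,3}$ is larger flips at that threshold, so the non-shadow count and the independence dimension must be recomputed on each side.
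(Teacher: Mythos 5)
First, a point of reference: the paper does not actually prove \cref{thm:Peng}; it is quoted from Huang--Peng, and the authors explicitly decline to rerun their framework at the third level. So the comparison below is against the paper's general framework (\cref{section:Framework}) and its worked second-level instance in the Appendix, which is the intended template.

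Your proposal contains a genuine gap at its quantitative core. You propose to prove $|\mathcal{F}_{\overline{1}}|\le 3$ when $n\ge 3k-2$ and $|\mathcal{F}_{\overline{1}}|\le 2$ when $2k+1\le n\le 3k-3$, by a linear-independence argument in an ambient space of that dimension. This is false, and it is falsified by the extremal families in the statement itself: in regime (1) with $k=4$, the family $\mathcal{M}_{4,3}$ attains equality and has $|\mathcal{F}_{\overline{1}}|=3>2$; in regime (2) with $k=5$, the family $\mathcal{M}_5=\mathcal{M}_{5,n-5}$ attains equality and has $|\mathcal{F}_{\overline{1}}|=n-5$, which is unbounded. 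The additive constant ($+2$ or $+3$) in the bound is \emph{not} an upper bound on the off-star part; it is the value of $x=|\mathcal{F}_0|$ at the minimizer of the quantity $|\mathcal{S}|-|\mathcal{F}_0|$. The paper's mechanism is $|\mathcal{F}|\le\binom{n-1}{k-1}-\bigl(|\mathcal{S}|-|\mathcal{F}_0|\bigr)$, after which one must lower-bound $|\mathcal{S}|$ as a function of $x$ for \emph{every} $x\ge 2$ (\cref{claim:SizeOfS}, \cref{claim:StabilityForS}), minimize $f(x)=g(x)-x$ over the whole admissible range (\cref{claim:SuanTMD}), and dispose of the very large $x$ via \cref{claim:MonotoneInT} together with Frankl's \cref{thm:Frankl}. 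Large $|\mathcal{F}_0|$ is compensated by a correspondingly large non-shadow family $\mathcal{S}$, not excluded outright; any proof that caps $|\mathcal{F}_{\overline{1}}|$ by the constant cannot succeed.

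Two secondary issues. First, at the third level the sunflower dichotomy of \cref{claim:StabilityForS} is no longer fine enough: for $x=2$ one must separate $|F_1\cap F_2|=k-2$ (which produces the anomalous extremal family $\mathcal{K}_2$ and whose $|\mathcal{S}|=2\binom{n-k-1}{k-1}-\binom{n-k-3}{k-1}$ beats the $\mathcal{M}_{k,3}$-type count precisely when $n\le 3k-3$) from $|F_1\cap F_2|\le k-3$; this refinement is what creates the regime change at $n=3k-2$, and your plan does not contain a step that would detect it. Second, the paper deliberately avoids shifting (it works with the maximum-degree element $p$ of a maximal non-trivial family), because shifting does not preserve the excluded isomorphism classes $\mathcal{M}_{k,2}$, $\mathcal{M}_3$, $\mathcal{M}_4$ nor the extremal ones $\mathcal{K}_2$, $\mathcal{M}_{k,3}$, $\mathcal{M}_k$; your opening reduction would need a substantial justification that you only gesture at.
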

The theorems above can be viewed as the 1st, 2nd, and 3rd-level full stability results for the Erd\H{o}s-Ko-Rado theorem, respectively. Currently, there are several different proofs for these stability results, particularly regarding the Hilton-Milner theorem~\cite{2019MoscFrankHM,1986JCTAFFHM,1992JCTAFTHM,2018DMHM,2018JCTA,1985GC} and some interesting variants~\cite{2018JCTAFrankl,2019KupavskiiJCTA}. However, as far as we know, no work has emerged that approaches proving these stability results from multilinear polynomial methods. We aim to fill this gap.

The main contribution of this paper is to provide an algebraic approach to proving the stability result at the $t$-th level of the Erd\H{o}s-Ko-Rado theorem for arbitrary positive integer $t$. To more clearly demonstrate our method, we will present a slightly weaker version of the stability result under the following conditions. Specifically, we make two assumptions: first, for the stability result at the $t$-th level, we assume $k \geq t + 2$; second, we assume $n > \frac{(5+\sqrt{5})k-7}{2}\approx 3.618k$ instead of $n > 2k$. 

\begin{theorem}[$t$-th level stability]\label{thm:AllLayerStability}
Let $t\ge 4$ be a positive integer. For any positive integer $k\ge t+2$, let $n>\frac{(5+\sqrt{5})k-7}{2}$ and $\mathcal{F}\subseteq\binom{[n]}{k}$ be a non-trivial intersecting family. Suppose $\mathcal{F}\not\subseteq \mathcal{M}_{k,t_{0}-1}$ for any $2\le t_{0}\le t$, then 
\begin{equation*}
    |\mathcal{F}|\le \binom{n-1}{k-1}-\sum\limits_{j=1}^{t}\binom{n-k-j}{k-j}+t.
\end{equation*}
When $k=t+2$, the equality holds if and only if $\mathcal{F}$ is isomorphic to $\mathcal{M}_{t+2,t}$ or $\mathcal{M}_{t+2}$, and when $k>t+2$, the equality holds if and only if $\mathcal{F}$ is isomorphic to $\mathcal{M}_{k,t}$. 
\end{theorem}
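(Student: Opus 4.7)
The plan is to reduce the bound to a linear-independence statement and prove it by evaluating multilinear polynomials on a carefully chosen family of ``non-shadow'' characteristic vectors built out of the conjectured extremal configuration $\mathcal{M}_{k,t}$. Fix $v \in [n]$ attaining $d_{\max}(\mathcal{F})$ and, after relabeling, take $v=1$. Decompose $\mathcal{F} = \mathcal{A} \cup \mathcal{B}$ with $\mathcal{A} = \{F \in \mathcal{F} : 1 \in F\}$ and $\mathcal{B} = \{F \in \mathcal{F} : 1 \notin F\}$, and write $\mathcal{A}^{c} = \{G \in \binom{[n]}{k} : 1 \in G,\ G \notin \mathcal{A}\}$ for the ``missing stars''. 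Since $|\mathcal{A}| = \binom{n-1}{k-1} - |\mathcal{A}^{c}|$, the target bound is equivalent to
\begin{equation*}
|\mathcal{A}^{c}| - |\mathcal{B}| \;\ge\; \sum_{j=1}^{t}\binom{n-k-j}{k-j} - t.
\end{equation*}
The intersecting property of $\mathcal{F}$ already forces $\mathcal{A}^{c} \supseteq \bigcup_{B \in \mathcal{B}} \{G : 1 \in G,\ G \cap B = \emptyset\}$. I would then invoke the hypothesis $\mathcal{F} \not\subseteq \mathcal{M}_{k, t_0 - 1}$ in succession for $t_0 = 2, 3, \ldots, t$ to extract an ordered ``witness'' sequence $B_1, \ldots, B_{\ell} \in \mathcal{B}$ incompatible with each of the nested configurations, which will seed the algebraic argument below.

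To each $B \in \mathcal{B}$ attach the multilinear polynomial $p_{B}(x) = \prod_{i \in B}(1 - x_i)$ on $\{0,1\}^{[2,n]}$, and to each $G \in \mathcal{A}^{c}$ attach the monomial $q_{G}(x) = \prod_{i \in G \setminus \{1\}} x_i$. These live in the space $\mathcal{V}$ of multilinear polynomials in $x_2, \ldots, x_n$ of degree at most $k-1$, naturally paired with the characteristic vectors of $(k-1)$-subsets of $[2, n]$. The crucial translation is that $p_{B}(\mathbf{1}_{T}) \neq 0$ iff $T \cap B = \emptyset$, while $q_{G}(\mathbf{1}_{T}) = 1$ iff $G \setminus \{1\} \subseteq T$. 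The aim is to show the combined family $\{p_{B}\}_{B \in \mathcal{B}} \cup \{q_{G}\}_{G \in \mathcal{A}^{c}}$ is linearly independent in $\mathcal{V}$; this, packaged with the forced presence in $\mathcal{A}^{c}$ of every star set disjoint from some $B$, will deliver the claimed inequality. Concretely, for each $B$ in the witness sequence I would build a test set $T_{B} \subseteq [2,n]$ of size $k-1$ on which $p_{B}$ does not vanish but all $p_{B'}$ preceding $B$, and all $q_{G}$ already accounted for, do, thereby producing an upper-triangular evaluation scheme.

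The main obstacle will be engineering these non-shadow test sets $T_{B}$ so that they simultaneously avoid the constraints imposed by earlier polynomials and by the extremal ``core'' $[2, k+t]$ of $\mathcal{M}_{k,t}$. This is where the numerical threshold $n > \frac{(5+\sqrt{5})k - 7}{2}$ is expected to play its role: a counting estimate comparing the number of available $(k-1)$-subsets of $[k+t+1, n]$ to the number of forbidden configurations (coming from pairwise interactions among the $p_B$'s and $q_G$'s) should yield an inequality whose sharp form has exactly this root of a quadratic in $k$. Once independence is established, the bound $|\mathcal{F}| \le \binom{n-1}{k-1} - \sum_{j=1}^{t}\binom{n-k-j}{k-j} + t$ is immediate from comparing dimensions in $\mathcal{V}$. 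Equality analysis is then a traceback: each triangularity constraint is forced to be tight, pinning the witnesses to lie in the single coset $[2,k] \cup \{k+i\}$ for $1 \le i \le t$ and identifying $\mathcal{F} \cong \mathcal{M}_{k,t}$ when $k > t+2$; in the degenerate case $k = t+2$ the polynomial identification collapses and a short case analysis recovers the additional extremal family $\mathcal{M}_{t+2}$.
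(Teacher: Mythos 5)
Your opening move matches the paper's: split $\mathcal{F}$ by the maximum-degree element, observe that $|\mathcal{A}|=\binom{n-1}{k-1}-|\mathcal{A}^{c}|$, and note that the intersecting property forces $\mathcal{A}^{c}\supseteq\bigcup_{B\in\mathcal{B}}\{G: 1\in G,\ G\cap B=\emptyset\}$ (this union is, up to removing the element $1$, exactly the paper's non-shadow family $\mathcal{S}$, and your containment is the content of its Claim~3.2). But the algebraic step you build on top of this is aimed in the wrong direction. Linear independence of $\{p_{B}\}_{B\in\mathcal{B}}\cup\{q_{G}\}_{G\in\mathcal{A}^{c}}$ in a space of dimension $\sum_{h\le k-1}\binom{n-1}{h}$ can only give an \emph{upper} bound on $|\mathcal{B}|+|\mathcal{A}^{c}|$, whereas the theorem is equivalent to a \emph{lower} bound on $|\mathcal{A}^{c}|-|\mathcal{B}|$; no dimension comparison of the kind you describe can produce that. (In the paper the linear algebra plays a different role: it upper-bounds $|\mathcal{F}_{1}|+|\mathcal{S}|$ by $\binom{n-1}{k-1}$, and the burden then shifts to lower-bounding $|\mathcal{S}|$ combinatorially.) What your proposal is missing is precisely that combinatorial engine: the decomposition of the union into nested pieces $\mathcal{C}_{i}=\{C: C\cap F_{i}=\emptyset,\ C\cap F_{j}\neq\emptyset\ \forall j<i\}$ giving $|\mathcal{S}|\ge\sum_{j=1}^{x}\binom{n-k-j}{k-j}$ for $x=|\mathcal{B}|\le k$, the stability statement that equality forces $\mathcal{B}$ to be a sunflower with $k-1$ common elements (with a quantitative gain of $\binom{n-k-3}{k-2}$ otherwise), and the minimization of $g(x)-x$ over $x\in[t,n-k]$, whose two-sided tie $f(t)=f(n-k)$ when $k=t+2$ is what produces the extra extremal family $\mathcal{M}_{t+2}$ --- not a ``collapse of the polynomial identification.''

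Two regimes of $|\mathcal{B}|$ are also unaddressed. For $2\le|\mathcal{B}|\le t-1$ the hypothesis $\mathcal{F}\not\subseteq\mathcal{M}_{k,t_{0}-1}$ must be converted into the statement that $\mathcal{B}$ is not a sunflower with $k-1$ common elements, and the resulting surplus $\binom{n-k-3}{k-2}$ must beat $\sum_{j=3}^{t}\binom{n-k-j}{k-j}\le\binom{n-k-2}{k-3}$; this comparison, not a count of test sets, is where $n>\frac{(5+\sqrt{5})k-7}{2}$ enters. For $|\mathcal{B}|>n-k$ the union bound saturates and one needs a separate argument (the paper combines monotonicity of the maximum degree in $|\mathcal{B}|$ with Frankl's theorem on families with $d_{\max}(\mathcal{F})\le d_{\max}(\mathcal{M}_{k})$). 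Without these cases and without a mechanism that actually lower-bounds $|\mathcal{A}^{c}|-|\mathcal{B}|$, the proposal does not close.
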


Note that with our framework, combined with some simple and appropriate structural analysis, we can also obtain stability results in~\cref{thm:HiltonMilner,thm:Han,thm:Peng} mentioned earlier, as well as higher-layer stability results. However, since the main purpose of this paper is to present a new framework and method, we will not fully expand on these full proofs. For interested readers, we will provide an alternative proof of~\cref{thm:Han} in the Appendix using our unified framework.

We remark that in~\cref{thm:AllLayerStability}, we provide the upper bound for $|\mathcal{F}|$ under the assumption that we prohibit all of the extremal structures up to the first $t-1$ levels. Moreover, we can characterize the corresponding extremal structures. After completing this draft, we are informed by Jian Wang that recent work by Kupavskii, and by Frankl and Wang~\cite{2021Diversity,2024WangEUJC,2018Kupa} established a similar upper bound under the condition that the \emph{diversity} $\gamma(\mathcal{F})=|\mathcal{F}|-d_{\textup{max}}(\mathcal{F})$ is large. Although we do not use the concept of diversity here, it has proven valuable in the study of $k$-uniform intersecting families. We recommend the interested readers to~\cite{2021Hung,2018CPC,2021Diversity,2018Kupa,2019KupavskiiJCTA} and the references therein. Instead, we will introduce our framework and give the self-contained and elementary proofs in~\cref{section:Framework}. When we finally try to analyze the extremal structures in~\cref{section:ProofS} and~Appendix, for convenience, we will use a celebrated result of Frankl~\cite{1987JCTAFrankl}, see~\cref{thm:Frankl}.

\section{Robust linear algebra methods}
\subsection{Some useful lemmas}
The following triangular criterion is useful when we want to prove a sequence of polynomials to be linearly independent.

\begin{prop}\label{prop:triangular}
    Let $f_{1},f_{2},\ldots,f_{m}$ be functions in a linear space. If $\boldsymbol{v}^{(1)},\boldsymbol{v}^{(2)},\ldots,\boldsymbol{v}^{(m)}$ are vectors such that $f_{i}(\boldsymbol{v}^{(i)})\neq 0$ for $1\le i\le m$ and $f_{i}(\boldsymbol{v}^{(j)})=0$ for $i>j$, then $f_{1},f_{2},\ldots,f_{m}$ are linearly independent.
\end{prop}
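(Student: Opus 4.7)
The plan is to prove the claimed linear independence by a direct evaluation-and-induction argument; this is the textbook way in which a triangular system of this form is exploited, and the hypotheses of the proposition are tailored precisely for it. Specifically, I would suppose that $\sum_{i=1}^{m} c_i f_i = 0$ holds as an identity of functions for some scalars $c_1,\ldots,c_m$, and then recover the $c_i$ one at a time by evaluating at the witness vectors $\boldsymbol{v}^{(1)}, \boldsymbol{v}^{(2)}, \ldots, \boldsymbol{v}^{(m)}$ in increasing order of index.

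First, I would plug in $\boldsymbol{v}^{(1)}$. The triangular hypothesis $f_i(\boldsymbol{v}^{(1)}) = 0$ for every $i > 1$ eliminates all terms except $c_1 f_1(\boldsymbol{v}^{(1)})$, and since $f_1(\boldsymbol{v}^{(1)}) \neq 0$ by assumption, this forces $c_1 = 0$. Next, assuming inductively that $c_1 = \cdots = c_{j-1} = 0$ for some $j \le m$, I would evaluate the remaining identity $\sum_{i=j}^{m} c_i f_i = 0$ at $\boldsymbol{v}^{(j)}$; by the triangular hypothesis every term with $i > j$ vanishes, leaving $c_j f_j(\boldsymbol{v}^{(j)}) = 0$, and then $f_j(\boldsymbol{v}^{(j)}) \neq 0$ gives $c_j = 0$. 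Running this through $j = 1, 2, \ldots, m$ yields $c_1 = \cdots = c_m = 0$, which is exactly the desired linear independence.

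There is really no substantive obstacle here: the statement is the standard triangular criterion, and the only point that requires the tiniest bit of care is to evaluate the vectors in the correct (increasing) order, so that at each step the off-diagonal vanishing condition $f_i(\boldsymbol{v}^{(j)}) = 0$ for $i > j$ is available to kill every term that has not yet been inductively eliminated. The rest is a mechanical unwinding.
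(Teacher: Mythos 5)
Your proof is correct: the inductive evaluation at $\boldsymbol{v}^{(1)},\ldots,\boldsymbol{v}^{(m)}$ in increasing order, using the off-diagonal vanishing to isolate $c_j f_j(\boldsymbol{v}^{(j)})$ at each step, is exactly the canonical argument for this triangular criterion. The paper states the proposition without proof, treating it as a standard fact, so there is no alternative approach to compare against.
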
 
We follow the notations on a proof of Erd\H{o}s-Ko-Rado theorem via multilinear polynomials~\cite{2006PoLyEKR}. Suppose that $\mathcal{F}:=\{F_{1},F_{2},\ldots,F_{m}\}$, where $F_{i}\subseteq [n]$ for each $1\le i\le m$. For a set $P\subseteq [n]$ and a non-negative integer $\beta$, we say the set $F$ satisfies the property \emph{$(P,\beta)$-intersection} if $|F\cap P|=\beta$. Now suppose for each $F_{i}\in\mathcal{F}$, we write a collection of $s$ intersection properties (allow repetition) as
\begin{equation*}
    R_{i}=\{(P_{i_1},\beta_{i_1}),(P_{i_2},\beta_{i_2})),\ldots,(P_{i_{s}},\beta_{i_{s}})\}.
\end{equation*}
In~\cite{2006PoLyEKR}, the authors built the relation between the multilinear polynomials and the certain collections of intersection properties, here we introduce the following key lemma and the proof in details.

 \begin{lemma}\label{HowToLP}
    Let $\mathcal{F}=\{F_{1},F_{2},\ldots,F_{m}\}\subseteq 2^{[n]}$. Suppose that for each $F_{i}\in \mathcal{F}$, one can find a set $X_{i}\subseteq [n]$ and a collection of $s$ intersection properties $R_{i}$ such that 
     \begin{enumerate}
         \item[\textup{(1)}] $X_{i}$ does not satisfy any of the conditions in $R_{i}$;
         \item[\textup{(2)}] $X_{i}$ satisfies at least one condition in $R_{j}$ for all $j>i$.
     \end{enumerate}
     Then we have $|\mathcal{F}|\le\sum\limits_{h=0}^{s}\binom{n}{h}$.
 \end{lemma}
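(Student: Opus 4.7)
The plan is to associate each set $F_i$ with a multilinear polynomial encoding the intersection properties in $R_i$, and then use the two hypotheses to put these polynomials into triangular form in the sense of Proposition~\ref{prop:triangular}. Concretely, for each $i$ and for $R_i = \{(P_{i_1},\beta_{i_1}),\dots,(P_{i_s},\beta_{i_s})\}$, I would define
\begin{equation*}
f_i(x_1,\dots,x_n) \;=\; \prod_{t=1}^{s}\left(\sum_{j\in P_{i_t}} x_j - \beta_{i_t}\right).
\end{equation*}
For any $X \subseteq [n]$, writing $\mathbf{1}_X \in \{0,1\}^n$ for its characteristic vector, the quantity $\sum_{j \in P_{i_t}} (\mathbf{1}_X)_j$ is exactly $|X \cap P_{i_t}|$, so $f_i(\mathbf{1}_X)$ vanishes precisely when $X$ satisfies at least one of the conditions in $R_i$.

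Next I would use the two hypotheses with the test vectors $\mathbf{v}^{(i)} := \mathbf{1}_{X_i}$. Hypothesis (1) says $X_i$ satisfies no condition in $R_i$, hence $f_i(\mathbf{v}^{(i)}) \neq 0$. Hypothesis (2), applied to the pair $(X_j, R_i)$ with $i > j$, says $X_j$ satisfies at least one condition in $R_i$, hence $f_i(\mathbf{v}^{(j)}) = 0$ whenever $i > j$. Proposition~\ref{prop:triangular} then immediately yields that $f_1,\dots,f_m$ are linearly independent as functions on $\{0,1\}^n$.

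Finally, I would restrict attention to the ambient space of multilinear polynomials, since we only evaluate on $\{0,1\}^n$ where $x_j^2 = x_j$. After expanding $f_i$ and replacing every higher power $x_j^r$ by $x_j$, one obtains a multilinear polynomial $\tilde f_i$ of degree at most $s$ that agrees with $f_i$ on $\{0,1\}^n$; the $\tilde f_i$ are therefore still linearly independent. The space of multilinear polynomials in $n$ variables of degree at most $s$ has dimension $\sum_{h=0}^{s}\binom{n}{h}$, so $m \le \sum_{h=0}^{s}\binom{n}{h}$, which is the desired inequality.

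There is not really a hard obstacle here; the content of the lemma is precisely the translation between combinatorial intersection properties and triangular systems of polynomials. The only point one has to be careful with is the direction of the triangular condition: the hypothesis reads ``$X_i$ satisfies a condition in $R_j$ for $j>i$,'' which gives $f_j(\mathbf{v}^{(i)})=0$ for $j>i$; this is exactly the shape required by Proposition~\ref{prop:triangular} when the test vectors are indexed to match the polynomials as $\mathbf{v}^{(i)}=\mathbf{1}_{X_i}$, so no re-indexing is necessary.
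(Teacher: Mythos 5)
Your proposal is correct and follows essentially the same route as the paper: the same polynomials $f_i(\boldsymbol{x})=\prod_{t=1}^{s}\bigl(\sum_{j\in P_{i_t}}x_j-\beta_{i_t}\bigr)$ evaluated at characteristic vectors, the same appeal to Proposition~\ref{prop:triangular}, and the same dimension count for multilinear polynomials of degree at most $s$. If anything, you are slightly more careful than the paper's own write-up in spelling out the multilinear reduction $x_j^r\mapsto x_j$ before invoking the dimension bound $\sum_{h=0}^{s}\binom{n}{h}$.
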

 \begin{proof}[Proof of~\cref{HowToLP}]
    For $\boldsymbol{x}=(x_{1},x_{2},\ldots,x_{n})$, we define a sequence of $n$-variate real polynomials $f_{i}(\boldsymbol{x})$ for $1\le i\le m$ as
     \begin{equation*}
         f_{i}(\boldsymbol{x})=\prod\limits_{a=1}^{s}\bigg(\sum\limits_{b\in P_{i_{a}}}x_{b}-\beta_{i_{a}}\bigg).
     \end{equation*}
 For a subset $A\subseteq [n]$, we will use $\boldsymbol{a}=(a_{1},a_{2},\ldots,a_{n})$ to represent its characteristic vector, that is, for each $1\le i\le n$, $a_{i}=1$ if $i\in A$ and $a_{i}=0$ otherwise. Observe that the scale product $\boldsymbol{a}\cdot\boldsymbol{b}=|A\cap B|$ for any $A,B\subseteq [n]$. Thus, we can write $f_{i}(\boldsymbol{x})$ as
 \begin{equation*}
     f_{i}(\boldsymbol{x})=\prod\limits_{a=1}^{s}(|X\cap P_{i_{a}}|-\beta_{i_{a}}).
 \end{equation*}
 Let $\boldsymbol{x}^{(i)}$ be the characteristic vector of set $X_{i}$. By condition~(1), we can see $f_{i}(\boldsymbol{x}^{(i)})\neq 0$. By condition~(2), $f_{j}(\boldsymbol{x}^{(i)})=0$ for all $j>i$. Then by Proposition~\ref{prop:triangular}, $\{f_{i}\}_{i=1}^{m}$ are linearly independent. Moreover, as each polynomial contains $n$ variables and the degree of each polynomial is at most $s$, thus we have $m\le\sum\limits_{h=0}^{s}\binom{n}{h}$, as claimed.
 \end{proof}

When we analyze the structural properties of set systems in proofs of~\cref{thm:Han} and~\cref{thm:AllLayerStability}, we will take advantage of the following result of Frankl~\cite{1987JCTAFrankl}.

\begin{theorem}\label{thm:Frankl}
Suppose that $n>2k$, $3\le i\le k+1$, $\mathcal{F}\subseteq\binom{[n]}{k}$ is an intersecting family with $d_{\textup{max}}(\mathcal{F})\le d_{\textup{max}}(\mathcal{M}_{i})$, then $|\mathcal{F}|\le|\mathcal{M}_{i}|$. Moreover if $|\mathcal{F}|=|\mathcal{M}_{i}|$, then either $\mathcal{F}$ is isomorphic to $\mathcal{M}_{i}$, or when $i=4$, $\mathcal{F}$ is isomorphic to $\mathcal{M}_{3}$.
    
\end{theorem}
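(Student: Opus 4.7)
The plan is to apply the polynomial framework of \cref{HowToLP} to a carefully chosen subfamily of $\mathcal{F}$, and to close the argument with \cref{thm:Frankl} for the characterization of equality. As a first reduction, I would pick an element realizing $d_{\textup{max}}(\mathcal{F})$; after relabeling I may assume this element is $1$, and partition $\mathcal{F}=\mathcal{A}\sqcup \mathcal{B}$ with $\mathcal{A}=\{F\in\mathcal{F}:1\in F\}$ and $\mathcal{B}=\mathcal{F}\setminus\mathcal{A}$. Because $\mathcal{F}$ is non-trivially intersecting, $\mathcal{B}\neq\emptyset$, and the chain of non-containment hypotheses $\mathcal{F}\not\subseteq\mathcal{M}_{k,t_{0}-1}$ for $2\le t_{0}\le t$ will be used to pin down a rigid structure on $\mathcal{B}$, namely that every member of $\mathcal{B}$ shares a common ``core'' of size roughly $k-1$ and differs only along a small ``tail'' of length at most $t$.

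The second stage is to bound $|\mathcal{B}|\le t$ via \cref{HowToLP}. I would list $\mathcal{B}=\{T_{1},\ldots,T_{m}\}$ in a decreasing order of intersection with the common core $[2,k]$, and for each $T_{i}$ assign one or two intersection conditions $(P_{i},\beta_{i})$ whose role is to distinguish $T_{i}$ from all $T_{j}$ with $j>i$. The accompanying witness vector $X_{T_{i}}$ can be taken to be a concrete $k$-set supported on a hitting set $H\subseteq [n]$ of size $O(k+t)$. Keeping the number of conditions $s$ per set as small as possible and arranging that the resulting polynomials live in a fixed $t$-dimensional coordinate subspace associated to the tail $[k+1,k+t]$, the triangular criterion yields $|\mathcal{B}|\le t$. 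The lower bound $n>\frac{(5+\sqrt{5})k-7}{2}$ enters here to certify that the witness vectors can be constructed compatibly with the hitting-set requirements; the quadratic shape reflects that two $O(k)$-sized collections of constraints must fit inside $[n]$ disjointly.

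Every set in $\mathcal{A}$ must contain the heavy element $1$ and intersect every $T\in\mathcal{B}$, so $\mathcal{A}$ is contained in the collection of $k$-sets through $1$ that either meet the core $[2,k]$ or contain the whole tail $[k+1,k+t]$. Counting forbidden sets through $1$ that avoid the core and using Pascal's identity to telescope,
\begin{equation*}
    |\mathcal{A}|\le \binom{n-1}{k-1}-\binom{n-k}{k-1}+\binom{n-k-t}{k-1-t}=\binom{n-1}{k-1}-\sum_{j=1}^{t}\binom{n-k-j}{k-j}.
\end{equation*}
Adding the two bounds yields $|\mathcal{F}|\le |\mathcal{M}_{k,t}|$ as required.

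For the equality characterization I would invoke \cref{thm:Frankl}. If $|\mathcal{F}|=|\mathcal{M}_{k,t}|$ then either $d_{\textup{max}}(\mathcal{F})=d_{\textup{max}}(\mathcal{M}_{k,t})$, in which case \cref{thm:Frankl} forces $\mathcal{F}\cong \mathcal{M}_{k,t}$; or, in the borderline case $k=t+2$, the saturation in the second stage can occur with $d_{\textup{max}}(\mathcal{F})=d_{\textup{max}}(\mathcal{M}_{t+2})$, which \cref{thm:Frankl} then matches with the additional extremal configuration $\mathcal{M}_{t+2}$. The expected main obstacle is the second stage: designing the intersection conditions and witness vectors so that the family $\{f_{T}\}_{T\in\mathcal{B}}$ simultaneously satisfies the triangular hypothesis of \cref{HowToLP} and spans a subspace of dimension at most $t$, while robustly handling the full chain of exclusions $\mathcal{F}\not\subseteq\mathcal{M}_{k,t_{0}-1}$. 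Balancing the flexibility needed for the structural induction against the rigidity of the polynomial space, and keeping the $n$-threshold quadratic rather than higher order, is where the technical work will concentrate.
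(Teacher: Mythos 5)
Your proposal does not prove the statement in question. The statement is \cref{thm:Frankl}, Frankl's degree-versus-size theorem: any intersecting family $\mathcal{F}\subseteq\binom{[n]}{k}$ with $d_{\textup{max}}(\mathcal{F})\le d_{\textup{max}}(\mathcal{M}_i)$ satisfies $|\mathcal{F}|\le|\mathcal{M}_i|$, with the stated characterization of equality. The paper offers no proof of this result; it is imported verbatim from Frankl's 1987 paper and used as a black box in \cref{section:ProofS} and the Appendix. What you have sketched instead is a proof of \cref{thm:AllLayerStability}: your hypotheses ($\mathcal{F}\not\subseteq\mathcal{M}_{k,t_0-1}$ for $2\le t_0\le t$, non-triviality, the threshold $n>\frac{(5+\sqrt{5})k-7}{2}$) and your conclusion ($|\mathcal{F}|\le|\mathcal{M}_{k,t}|$ with extremal families $\mathcal{M}_{k,t}$ and $\mathcal{M}_{t+2}$) belong to that theorem, not to \cref{thm:Frankl}, whose hypothesis is a bound on the maximum degree and nothing else. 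Worse, your closing paragraph invokes \cref{thm:Frankl} itself to settle the equality cases; read as a proof of \cref{thm:Frankl}, the argument is circular.

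Even taken as a sketch of \cref{thm:AllLayerStability}, the second stage fails. The bound $|\mathcal{B}|\le t$ is not provable from the stated hypotheses: excluding $\mathcal{M}_{k,t_0-1}$ for $2\le t_0\le t$ does not cap the number of sets missing the heavy element, and $\mathcal{M}_{t+2}$ — one of the extremal configurations when $k=t+2$ — has $|\mathcal{B}|=n-k\gg t$. The paper never bounds $|\mathcal{F}_0|$; instead it applies \cref{HowToLP} to $\mathcal{F}_1$ together with auxiliary families $\mathcal{H}$, $\mathcal{G}$ and a family $\mathcal{S}$ of non-shadows to get $|\mathcal{F}|\le\binom{n-1}{k-1}-(|\mathcal{S}|-|\mathcal{F}_0|)$, then minimizes $|\mathcal{S}|-|\mathcal{F}_0|$ over all admissible sizes of $\mathcal{F}_0$ (\cref{claim:SizeOfS}, \cref{claim:SuanTMD}, \cref{claim:LargeT}), using \cref{thm:Frankl} legitimately as an external input only in the regime $|\mathcal{F}_0|>n-k$. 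Likewise, your containment of $\mathcal{A}$ in the sets through $1$ that meet the core or contain the tail presupposes that $\mathcal{B}$ already has the sunflower structure of $\mathcal{M}_{k,t}$, which is the conclusion of the stability analysis rather than a free reduction.
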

\subsection{Overview of the robust linear algebra methods}
Our main approach is based on a robust linear algebra method developed in recent work of Gao, Liu and the second author~\cite{2023StabilityComb}.
Here we first show an example and explain how the standard linear algebra method works and then summarize some interesting tricks. Furthermore, we will briefly introduce the main ideas on the robust linear algebra method. For more on the linear algebra methods in combinatorics, we recommend the interested readers to the great textbook~\cite{2020FranklBabai} and a recent note~\cite{2021Lisa}.

Suppose that $\mathcal{F}\subseteq 2^{[n]}$ is an $L$-intersecting family for some subset $L\subseteq [n]$ with $|L|=s$, Frankl and Wilson~\cite{1981FranklWilson} showed that $|\mathcal{F}|\leq \sum\limits_{i=0}^{s}\binom{n}{i}$. We sketch the shorter proof of the above result by Babai~\cite{1988Babai} as follows. Let $\mathcal{F}=\{F_{1},F_{2},\ldots,F_{m}\}$, indexed so that $|F_{1}|\leq\cdots\leq |F_{m}|$ and $L=\{\ell_{1},\ldots,\ell_{s}\}$. For each $i$ let $\vec v_{i}$ be the incidence vector of $F_{i}$. Define polynomials $f_{1},f_{2},\ldots,f_{m}$ by $f_{i}(\vec x)=\prod\limits_{\ell_{k}<|F_{i}|}(\vec{x}\cdot\vec v_{i}-\ell_{k})$. Then one can easily prove that $\{f_{i}\}_{i=1}^{m}$ are linearly independent via Proposition~\ref{prop:triangular}. Thus $|\mathcal{F}|$ can be upper bounded by the number of all possible polynomials. The first trick one can use is multilinear reduction, that is, we can replace each $x_{i}^{t}$ term by $x_{i}$ for any $1\leq i\leq n$ and $t\geq 2$ because $x_{i}\in\{0,1\}$. Using this trick, one can efficiently give the upper bound $\sum\limits_{i=0}^{s}\binom{n}{i}$ for $|\mathcal{F}|$ as the total degree of each monomial is at most $s$.

The second trick is that, one can further add more associated polynomials. For example, we can add $\sum_{i=0}^{s-1}\binom{n-1}{i}$ many extra polynomials in the following way. Label the sets in $\binom{[n-1]}{\leq s}$ with label $B_{i}$ for $1\leq i\leq q= \sum_{i=0}^{s-1}\binom{n-1}{i}$ such that $|B_{i}|\leq |B_{j}|$ when $i<j$. Let $\vec w_{i}$ be the characteristic vector of $B_{i}$, and let $h_{B_{i}}(\vec x)=\prod_{j\in B_{i}}x_{j}$ for $i>1$. Then define a multilinear polynomial $g_{B_{i}}$ in $n$ variables as follows. $g_{B_{1}}=x_{n}-1$ and $g_{B_{i}}=(x_{n}-1)h_{B_{i}}(\vec x)$ for $i>1$. In~\cite{2003Snevily}, Snevily proved that $\{f_{i}\}_{i=1}^{m}$ and $\{g_{B_{j}}\}_{j=1}^{q}$ are linearly independent. Thus the upper bound for $|\mathcal{F}|$ can be improved to $\sum_{i=0}^{s}\binom{n}{i}-\sum_{i=0}^{s-1}\binom{n-1}{i}=\sum_{i=0}^{s}\binom{n-1}{i}$. This trick has been applied to several problems, for example, see~\cite{ 1991AlonBabaiSuzuki, 1983Bannai, 1984Blokhuis, 2009JCTAChen, 2007EUJC, 2007JACMubayi, 1975Ray}. In a word, this trick consists of two parts, the first step is to choose some appropriate extra polynomials, one can see that in many previous works involved with this trick, the extra polynomials usually are clear and natural, which associate some explicit family of subsets. For instance in the above famous case, the extra polynomials associate to $\binom{[n-1]}{\leq s}$ exactly. The second part is that we need to show the union of the original polynomials and the extra polynomials are linearly independent. Usually the second part is much more complicated and difficult. Once we prove the linear independence, we can immediately see the improvement on the bounds for size of the family.

In the proofs of stability results of Kleitman's isodiametric inequality, the authors in~\cite{2023StabilityComb} mainly focus on another direction about the second trick. More precisely, one can first carefully choose a family of subsets which satisfies some appropriate properties and then associate each subset of the chosen family with the extra polynomial one by one. Then it will be easier to prove the linear independence. At the cost, one cannot know all of information about the chosen family, but sometimes one can ignore it, e.g., see the proof in~\cite[Theorem~1.10]{2023StabilityComb}. While in some cases, then the main task in the robust linear algebra method is to dig out the structural properties of the family one chooses, to achieve this, usually one can apply some structural analysis, e.g., see the proof in~\cite[Theorem~1.8]{2023StabilityComb}.

There are several advantages in this robust linear algebra method. The first is that the linear independence usually will be easier to show. The second is that, usually the previous linear algebra methods just provide the bound of the size, while the new method can be used to prove some stability results, that means, we can not only capture the size of the family, but also obtain some structural information. We believe that this method has the potential to be applied to a wider range of problems.

\section{A unified framework and proof of~\cref{thm:AllLayerStability}}
\subsection{High level overview of our framework}
Although our proofs are relatively simple, it might be helpful to briefly outline the main ideas.
\begin{enumerate}
    \item Following the ideas in~\cite{2006PoLyEKR}, we partition the family $\mathcal{F}$ into two sub-families, $\mathcal{F}_{0}$ and $\mathcal{F}_{1}$, where the sets in $\mathcal{F}_{1}$ contain the element that attains the maximum degree in $\mathcal{F}$. We then introduce two auxiliary families, $\mathcal{H}$ and $\mathcal{G}$. At this stage, we can associate these families with certain intersection conditions, which leads to an algebraic proof of~\cite{2006PoLyEKR}.
    \item The key ingredient involves finding one more auxiliary family $\mathcal{S}$, which is a sub-family of the non-shadows of $\mathcal{F}_{1}$. We carefully associate $\mathcal{S}$ with appropriate intersection conditions. The first crucial point occurs at~\cref{claim:LID}, after proving that the families $\mathcal{F}_{1}$, $\mathcal{G}$, $\mathcal{H}$, and $\mathcal{S}$ satisfy certain properties analogous to linear independence in normal linear algebra method, we obtain an important quantitative relation: $|\mathcal{F}_{1}| \leq \binom{n-1}{k-1} - |\mathcal{S}|$, leading to the inequality $|\mathcal{F}| \leq \binom{n-1}{k-1} - |\mathcal{S}| + |\mathcal{F}_{0}|$.
    \item To show the upper bound on $|\mathcal{F}|$, it is sufficient to analyze the lower bound on $|\mathcal{S}| - |\mathcal{F}_{0}|$. We establish a general lower bound on $|\mathcal{S}|$ in~\cref{claim:SizeOfS}, and later, through a stability argument, refine this bound in~\cref{claim:StabilityForS}, which is essential for determining the extremal structures.
    \item The final task is to carefully compare the specific sizes of several numbers. To do this, we establish inequalities between variables, explore monotonicity, and analyze the structure at extreme values in~\cref{claim:SuanTMD} and~\cref{claim:MonotoneInT}. Additionally, we will use~\cref{thm:Frankl} of Frankl. These steps are relatively straightforward and follow from natural analytical considerations.
\end{enumerate}

\subsection{Our framework}\label{section:Framework}
For positive integers $n\ge 2k+1$, without loss of generality, we can assume that $\mathcal{F}$ is a maximal non-trivial intersecting family of $\binom{[n]}{k}$, that is, if we add any new member of $\binom{[n]}{k}$ to $\mathcal{F}$, then $\mathcal{F}$ is not non-trivial intersecting. For a family $\mathcal{F}\in\binom{[n]}{k}$, we denote the \emph{shadow} of $\mathcal{F}$ as $\partial_{k-1}{\mathcal{F}}:=\{T\in\binom{[n]}{k-1}:T\subseteq F\ \text{for\ some\ }F\in\mathcal{F}\}$.
  Let $p\in [n]$ be the element which attains the maximum degree in $\mathcal{F}$.
Consider the following families:
\begin{itemize}
    \item $\mathcal{F}_{0}:=\{F\in\mathcal{F}:p\notin F\}$;
    \item $\mathcal{H}:=\{H\subseteq [n]: p\notin H, 0\le |H|\le k-2\}$;
    \item $\mathcal{F}_{1}:=\{F\in\mathcal{F}: p\in F\}$;
    \item $\mathcal{G}:=\{G\subseteq [n]:p\in G, 1\le |G|\le k-1\}$;
    \item $\mathcal{S}:=\{S\subseteq \binom{[n]}{k-1}\setminus\partial_{k-1}{\mathcal{F}_{1}}:p\notin S,  \exists   F\in\mathcal{F}_{0} \textup{ such\ that\ }
 S\cap F =  \emptyset \}$.
\end{itemize}
Let $\mathcal{A}:=\mathcal{F}_{1}\sqcup\mathcal{H}\sqcup\mathcal{G}\sqcup\mathcal{S}=\{A_{1},\ldots,A_{m}\}$. We define an ordering $\prec$ on the sets, and for two families $\mathcal{A},\mathcal{B}$, denote $\mathcal{A} \prec \mathcal{B}$ if and only if for any $A \in \mathcal{A}$ and $B \in \mathcal{B}$, we have $A \prec B$. We first arrange the sets in a linear order as follows: $\mathcal{H}\prec \mathcal{F}_{1}\prec \mathcal{G}\prec\mathcal{S}$. We put the members of $\mathcal{F}_{1}$ and $\mathcal{S}$ in arbitrary order and the members of $\mathcal{H}$ and $\mathcal{G}$ in order increasing by size, for example, $H_{i}\prec H_{j}$ if $|H_{i}|\le |H_{j}|$. To apply Lemma~\ref{HowToLP}, we need to associate each member $A\in\mathcal{A}$ with a set $X$ and at most $k-1$ many intersection conditions as follows.

\begin{itemize}
    \item For $H\in\mathcal{H}$, we can set $X:=H$ with intersection conditions $(\{h\},0)$ for each $h\in H$ and $([n],n-k-1)$.
    \item For $F\in\mathcal{F}_{1}$, we can set $X:=F\setminus\{p\}$ with intersection conditions $(F\setminus\{p\},\beta)$ for $0\le \beta\le k-2$.
    \item For $G\in\mathcal{G}$, we can set $X:=G$ with intersection conditions $(\{g\},0)$ for each $g\in G$.
    \item For $S\in\mathcal{S}$, we can set $X:=S$ with intersection conditions $(\{s\},0)$ for each $s\in S$.
\end{itemize}

We claim that the system $(A_{i},X_{i},R_{i})$, in which $A_{i}\in \mathcal{A}$, $X_{i}$ and the intersections conditions $R_{i}$ are defined as above, satisfies the conditions in Lemma~\ref{HowToLP} with $s=k-1$.

\begin{claim}\label{claim:LID}
    For each $A_{i}\in\mathcal{A}$, $X_{i}$ does not satisfy any of the conditions in $R_{i}$, and $X_{i}$ satisfies at least one condition in $R_{j}$ for all $j>i$. In particular, $|\mathcal{A}|\le\sum\limits_{\ell=1}^{k-1}\binom{n}{\ell}$.
\end{claim}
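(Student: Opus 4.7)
The plan is to verify both hypotheses of \cref{HowToLP} with $s = k - 1$ directly from the construction, and then invoke that lemma.

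Condition (1) is a routine case-by-case check. For $H \in \mathcal{H}$, every $h \in H$ gives $|H \cap \{h\}| = 1 \neq 0$, and $|H \cap [n]| = |H| \leq k - 2 < n - k - 1$ using $n > 2k$, so every condition in $R_i$ fails on $X_i = H$. For $F \in \mathcal{F}_1$, $|X_i \cap (F \setminus \{p\})| = k - 1$, which lies outside $[0, k-2]$. The cases $G \in \mathcal{G}$ and $S \in \mathcal{S}$ are analogous single-element checks, since $g \in G$ and $s \in S$ force $|X_i \cap \{g\}| = |X_i \cap \{s\}| = 1$.

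The substantive part is condition (2), which I would verify by running through the ten ordered pairs of families induced by $\mathcal{H} \prec \mathcal{F}_1 \prec \mathcal{G} \prec \mathcal{S}$. Nine of these reduce to one-line arguments:
\begin{itemize}
\item Within $\mathcal{H}$ or within $\mathcal{G}$: distinctness together with the size-non-decreasing order gives an element of the later set outside $X_i$, killing a singleton condition $(\{\cdot\}, 0)$.
\item Within $\mathcal{F}_1$: distinct $F, F' \ni p$ satisfy $|F \cap F'| \leq k - 1$, so $|(F \setminus \{p\}) \cap (F' \setminus \{p\})| \leq k - 2$, which hits an admissible $\beta$.
\item Within $\mathcal{S}$: two distinct $(k-1)$-sets cannot be nested.
\item From $\mathcal{H}$ to $\mathcal{F}_1$: $|H \cap (F \setminus \{p\})| \leq |H| \leq k - 2$.
\item From $\mathcal{H}$ or $\mathcal{F}_1$ to $\mathcal{G}$: $p \in G$ but $p \notin X_i$, so $(\{p\}, 0) \in R_j$ fires.
\item From $\mathcal{H}$ or $\mathcal{G}$ to $\mathcal{S}$: $|S| = k - 1 > k - 2 \geq |H|$ (respectively $|S| > |G \setminus \{p\}|$ together with $p \notin S$) finds an $s \in S \setminus X_i$.
\end{itemize}

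The lone case that requires more than raw bookkeeping, and which I expect to be the main (really, only) obstacle of the proof, is the pair $(\mathcal{F}_1, \mathcal{S})$. For $F \in \mathcal{F}_1$, the set $X_i = F \setminus \{p\}$ lies in $\partial_{k-1} \mathcal{F}_1$, whereas every $S \in \mathcal{S}$ is, by construction, in $\binom{[n]}{k-1} \setminus \partial_{k-1} \mathcal{F}_1$, so $S \neq F \setminus \{p\}$; equality of sizes $k-1$ then precludes $S \subseteq F \setminus \{p\}$, producing an $s \in S \setminus (F \setminus \{p\})$ that witnesses $(\{s\}, 0) \in R_j$. This is precisely the step where the role of $\mathcal{S}$ as a sub-family of the non-shadow $\binom{[n]}{k-1} \setminus \partial_{k-1} \mathcal{F}_1$ enters non-trivially, and it clarifies why the framework is built around this particular auxiliary family.

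With both hypotheses verified, \cref{HowToLP} with $s = k-1$ delivers the numerical bound on $|\mathcal{A}|$. The tighter form $\sum_{\ell=1}^{k-1} \binom{n}{\ell}$ in the statement follows from the observation that, with the sole exception of the polynomial attached to $H = \emptyset$, every $f_i$ expands as a product containing at least one linear factor of zero constant term (an $x_h$, an $x_g$, an $x_s$, or the $\beta = 0$ factor $\sum_{b \in F \setminus \{p\}} x_b$); so after multilinear reduction these polynomials lie in the subspace of multilinear polynomials of degrees $1, \ldots, k-1$, and a short dimension chase absorbs $f_\emptyset$ into the same bound.
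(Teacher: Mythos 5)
Your proof is correct and follows essentially the same route as the paper's: condition (1) is checked directly, and condition (2) is verified over the ten ordered pairs induced by $\mathcal{H}\prec\mathcal{F}_{1}\prec\mathcal{G}\prec\mathcal{S}$ with the same one-line witnesses, including the key pair $(\mathcal{F}_{1},\mathcal{S})$, which the paper also resolves exactly as you do via $\mathcal{S}\cap\partial_{k-1}\mathcal{F}_{1}=\emptyset$. The only divergence is your closing remark about sharpening $\sum_{\ell=0}^{k-1}\binom{n}{\ell}$ to $\sum_{\ell=1}^{k-1}\binom{n}{\ell}$: the polynomial attached to $H=\emptyset$ is $\sum_{b}x_{b}-(n-k-1)$, which has a nonzero constant term and hence does \emph{not} lie in the span of multilinear monomials of degree at least $1$, so the ``absorption'' step is not justified as written; note, however, that the paper itself never proves the $\ell=1$ lower limit (it appears to be a typo) and only invokes the bound $\sum_{\ell=0}^{k-1}\binom{n}{\ell}$ in the subsequent computation, which your argument does establish.
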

\begin{poc}
Recall that $\mathcal{H}\prec\mathcal{F}_{1}\prec\mathcal{G}\prec\mathcal{S}$, and we put the elements of $\mathcal{F}_{1}$ and $\mathcal{S}$ in arbitrary order and the members of $\mathcal{H}$ and $\mathcal{G}$ in order increasing by size. For distinct sets $A_{i}\preceq A_{j}\in \mathcal{A}$, we write $A_{i}\rightarrow A_{j}$ if $X_{i}$ satisfies at least one condition in $R_{j}$, and in particular, $A_{i}\rightarrow A_{i}$ if $X_{i}$ does not satisfy any of the conditions in $R_{i}$. For sub-families $\mathcal{X}\preceq\mathcal{Y}$ of $\mathcal{A}$, we write $\mathcal{X}\rightarrow\mathcal{Y}$ if for every $X\in \mathcal{X}$ and $Y\in\mathcal{Y}$ with $X\preceq Y$, we have $X\rightarrow Y$. Then it suffices to check the following $10$ situations: 

\begin{enumerate}
\item $\mathcal{H}\rightarrow\mathcal{H}$: For each $H_{i}\in\mathcal{H}$, obviously it does not satisfy $(\{h\},0)$ for each $h\in H_{i}$ and $([n],n-k-1)$ since $|H|\le k-2$, moreover, for any other $H_{j}$ with $|H_{j}|\ge |H_{i}|$, there exists some $h\in H_{j}$ such that $h\notin H_{i}$. Then $\mathcal{H}\rightarrow\mathcal{H}$ is checked.

 \item $\mathcal{H}\rightarrow \mathcal{F}_{1}$: For any $H\in\mathcal{H}$ and any $F\in\mathcal{F}_{1}$, we have $0\le |H\cap F|\le k-2$, then $\mathcal{H}\rightarrow\mathcal{F}_{1}$ is checked. 
    \item $\mathcal{H}\rightarrow\mathcal{G}$: Since for any $H\in\mathcal{H}$ and $G\in\mathcal{G}$, we have $p\notin H$ and $p\in G$, then $\mathcal{H}\rightarrow\mathcal{G}$ is checked. 
    
    \item $\mathcal{H}\rightarrow\mathcal{S}$: 
    Since for any $H\in\mathcal{H}$ and $S\in\mathcal{S}$, $|S|>|H|$, there exists some $s\in S$ such that $s\notin H$, then $\mathcal{H}\rightarrow\mathcal{S}$ is checked.

      \item $\mathcal{F}_{1}\rightarrow\mathcal{F}_{1}$: Since $\mathcal{F}_{1}$ is $k$-uniform, for any distinct $F_{i}\prec F_{j}\in\mathcal{F}_{1}$, we have $|F_{i}\cap F_{j}|\le k-1$ and $p\in F_{i}\cap F_{j}$. Therefore there exists some $x\in F_{j}\setminus\{p\}$ and $x\notin F_{i}\setminus\{p\}$, then $\mathcal{F}_{1}\rightarrow\mathcal{F}_{1}$ is checked.
        \item $\mathcal{F}_{1}\rightarrow \mathcal{G}$: Since for any $F\in \mathcal{F}_{1}$ and $G\in \mathcal{G}$, we can see $p\in G$ and $p\notin F\setminus\{p\}$, then $\mathcal{F}_{1}\rightarrow \mathcal{G}$ is checked. 
        \item $\mathcal{F}_{1}\rightarrow\mathcal{S}$: Since for any $F\in\mathcal{F}_{1}$ and $S\in\mathcal{S}$, $S\subseteq\binom{[n]}{k-1}\setminus\partial_{k-1}\mathcal{F}_{1}$, then there exists some $x\in S$ such that $x\notin F\setminus\{p\}$. Then $\mathcal{F}_{1}\rightarrow\mathcal{S}$ is checked.

         \item $\mathcal{G}\rightarrow\mathcal{G}$: For any distinct $G_{i}\prec G_{j}\in\mathcal{G}$ with $|G_{i}|\le|G_{j}|$, obviously there exists some $g\in G_{j}$ such that $G\notin G_{i}$, then $\mathcal{G}\rightarrow\mathcal{G}$ is checked.
      \item $\mathcal{G}\rightarrow\mathcal{S}$: Since for any $G\in\mathcal{G}$ and $S\in\mathcal{S}$, we have $p\in G$ and $p\notin S$. Moreover, notice that $|G|\le k-1$ and $|S|=k-1$, therefore there exists some element $s\in S$ such that $s\notin G$. Then $\mathcal{G}\rightarrow\mathcal{S}$ is checked.

      \item $\mathcal{S}\rightarrow\mathcal{S}$: Since for any distinct $S_{i}\prec S_{j}\in\mathcal{S}\subseteq\binom{[n]}{k-1}$ with $S_{i}\prec S_{j}$, there exists some $s\in S_{j}$ such that  $s\notin S_{i}$, then $\mathcal{S}\rightarrow\mathcal{S}$ is checked.
\end{enumerate}
This finishes the proof.
\end{poc}
Then by~\cref{HowToLP} and~\cref{claim:LID}, we have $|\mathcal{H}|+|\mathcal{F}_{1}|+|\mathcal{G}|+|\mathcal{S}|\le\sum\limits_{i=0}^{k-1}\binom{n}{i}$. Also note that $|\mathcal{G}|=|\mathcal{H}|=\sum\limits_{i=0}^{k-2}\binom{n-1}{i}$, which implies
\begin{equation*}
  d_{\textup{max}}(\mathcal{F}) = |\mathcal{F}_{1}|\le \sum\limits_{i=0}^{k-1}\binom{n}{i}-2\sum\limits_{i=0}^{k-2}\binom{n-1}{i}-|\mathcal{S}|=\binom{n-1}{k-1}-|\mathcal{S}|.
\end{equation*}
Moreover, we have
\begin{equation*}
|\mathcal{F}|=|\mathcal{F}_{0}|+|\mathcal{F}_{1}|\le\binom{n-1}{k-1}-(|\mathcal{S}|-|\mathcal{F}_{0}|).
\end{equation*}

We can immediately derive the following results when $|\mathcal{F}_{0}|$ is very small.
\begin{enumerate}
    \item When $|\mathcal{F}_{0}|=0$, suppose that there exists some $S\in\mathcal{S}$, then $S\cup\{p\}\notin \mathcal{F}$ and $(S\cup\{p\})\cap F\neq\emptyset$ for any $F\in\mathcal{F}$, which is a contradiction to that $\mathcal{F}$ is a maximal non-trivial intersecting family. Therefore, when $\mathcal{F}_{0}=\emptyset$, then $\mathcal{S}=\emptyset$, which gives $|\mathcal{F}|\le\binom{n-1}{k-1}$.
    \item When $|\mathcal{F}_{0}|=1$, set $\mathcal{F}_{0}=\{F_{0}\}$. By definitions, for each $S\in\mathcal{S}$, we can see $S\cap F_{0}=\emptyset$ and $p\notin S$. Moreover, we claim that $\binom{[n]\setminus (\{p\}\cup F_{0})}{k-1}\subseteq \mathcal{S}$. To see this, since $\mathcal{F}$ is an intersecting family, then for any member $P\in\partial_{k-1}\mathcal{F}_{1}$, at least one of the events $p\in P$ and $P\cap F_{0}\neq\emptyset$ occurs, which yields that $\partial_{k-1}\mathcal{F}_{1}\cap \binom{[n]\setminus (F_{0}\cup\{p\})}{k-1}=\emptyset$. Therefore, when $|\mathcal{F}_{0}|=1$, then $|\mathcal{S}|\ge \binom{n-k-1}{k-1}$, which also yields that $|\mathcal{F}|\le\binom{n-1}{k-1}-\binom{n-k-1}{k-1}+1$.
\end{enumerate} 
In general, Let $x:=|\mathcal{F}_{0}|\ge 1$, and $\mathcal{F}_{0}:=\{F_{1},F_{2},\ldots,F_{x}\}$, we have the following crucial claim by extending the above argument in the case of $|\mathcal{F}_{0}|=1$.

\begin{claim}\label{claim:LowerBoundForS}
$\mathcal{S}=\bigcup\limits_{i=1}^{x}\binom{[n]\setminus(F_{i}\cup\{p\})}{k-1}$.
\end{claim}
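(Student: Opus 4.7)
The plan is to verify the equality by two inclusions. The direction $\mathcal{S} \subseteq \bigcup_{i=1}^{x}\binom{[n]\setminus(F_{i}\cup\{p\})}{k-1}$ is immediate upon unpacking the definition of $\mathcal{S}$: any $S\in\mathcal{S}$ is a $(k-1)$-subset of $[n]$ with $p\notin S$ and with some $F_{i}\in\mathcal{F}_{0}$ satisfying $S\cap F_{i}=\emptyset$. These three conditions force $S\subseteq [n]\setminus(F_{i}\cup\{p\})$, and the cardinality $|S|=k-1$ then places $S$ in $\binom{[n]\setminus(F_{i}\cup\{p\})}{k-1}$.

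For the reverse inclusion I would pick an arbitrary $S\in\binom{[n]\setminus(F_{i}\cup\{p\})}{k-1}$ for some $1\le i\le x$ and check each defining condition of $\mathcal{S}$. The conditions $|S|=k-1$, $p\notin S$, and $S\cap F_{i}=\emptyset$ with $F_{i}\in\mathcal{F}_{0}$ are built into the choice of $S$. The only nontrivial requirement is $S\notin\partial_{k-1}\mathcal{F}_{1}$, which I would establish by contradiction. Suppose $S\subseteq F$ for some $F\in\mathcal{F}_{1}$. Since $|F|=k$, $|S|=k-1$, and $p\in F$ but $p\notin S$, the only way to extend $S$ to such an $F$ is $F=S\cup\{p\}$. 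Because $\mathcal{F}$ is intersecting, $F\cap F_{i}\neq\emptyset$. But $S\cap F_{i}=\emptyset$ by the choice of $S$, and $p\notin F_{i}$ since $F_{i}\in\mathcal{F}_{0}$, so $F\cap F_{i}=(S\cup\{p\})\cap F_{i}=\emptyset$, a contradiction.

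The argument is short, so there is no real obstacle; the only point requiring care is the unique recovery $F=S\cup\{p\}$, which uses simultaneously that $\mathcal{F}_{1}$ is the set of members of $\mathcal{F}$ through $p$ and that the cardinality gap is exactly one. Once this is in place, the intersecting property of $\mathcal{F}$ together with $p\notin F_{i}$ closes the case.
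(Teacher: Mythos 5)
Your proposal is correct and follows essentially the same route as the paper's proof: both directions are verified by unpacking the definition of $\mathcal{S}$, with the only substantive step being that a $(k-1)$-set $S$ disjoint from $F_{i}$ and avoiding $p$ cannot lie in $\partial_{k-1}\mathcal{F}_{1}$, since any containing $F\in\mathcal{F}_{1}$ would have to equal $S\cup\{p\}$ and would then be disjoint from $F_{i}$, contradicting the intersecting property. If anything, your phrasing of the contradiction (fixing the witness $F_{i}$ and computing $(S\cup\{p\})\cap F_{i}=\emptyset$ directly) is slightly cleaner than the paper's, but the content is identical.
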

\begin{poc}
    Set $\mathcal{T}:=\bigcup\limits_{i=1}^{x}\binom{[n]\setminus(F_{i}\cup\{p\})}{k-1}$, we first prove that $\mathcal{T}\subseteq \mathcal{S}$. Note that for any $T\in\mathcal{T}$, we can see $p\notin T$ and by definitions there exists some $F_{i}\in\mathcal{F}_{0}$ such that $T\cap F_{i}=\emptyset$. Then it suffices to show that $T$ cannot be a shadow of $\mathcal{F}_{1}$. Suppose that there is a $T\subseteq \mathcal{T}$ such that $T\subseteq F$ for some $F\in\mathcal{F}_{1}$, then $T=F\setminus\{p\}$. Since $\mathcal{F}$ is an intersecting family, we have $(T\cup\{p\})\cap F_{i}\neq\emptyset$ for each $i\in [x]$. However, this is impossible, because $p\notin F_{i}$, and there exists some $F_{i}\in\mathcal{F}_{0}$ such that $T\cap F_{i}=\emptyset$, a contradiction to the definition of $\mathcal{T}$. 
    
    On the other hand, for any $S\in\mathcal{S}$, there exists some $F_{i}\in\mathcal{F}_{0}$ such that $S\cap F_{i}=\emptyset$. Then $S\in\binom{[n]\setminus (F_{i}\cup\{p\})}{k-1}$, which yields $\mathcal{S}\subseteq \mathcal{T}$. This finishes the proof.
\end{poc}
For $\mathcal{F}_{0}=\{F_{1},F_{2},\ldots,F_{x}\}$, let $\mathcal{C}_{1}=\{C\subseteq\binom{[n]}{k-1}:p\notin C,C\cap F_{1}=\emptyset\}$. Moreover, for each $2\le i\le x$, we define
\begin{equation*}
    \mathcal{C}_{i}:=\bigg\{C\subseteq\binom{[n]}{k-1}:p\notin C, C\cap F_{i}=\emptyset, C\cap F_{j}\neq\emptyset\ \textup{for\ any\ }1\le j\le i-1  \bigg\}.
\end{equation*}
By definitions one can see that $\mathcal{C}_{1},\mathcal{C}_{2},\ldots,\mathcal{C}_{x}$ are pairwise disjoint, moreover, it is not hard to see that $\bigcup\limits_{i=1}^{x}\mathcal{C}_{i}=\bigcup\limits_{i=1}^{x}\binom{[n]\setminus(F_{i}\cup\{p\})}{k-1}$.
When $1\le i\le x\le k$, by definition we have $|\mathcal{C}_{i}|\ge\binom{n-k-i}{k-i}$, therefore, we have the following lower bound on $|\mathcal{S}|$ by~\cref{claim:LowerBoundForS}.

\begin{claim}\label{claim:SizeOfS}
    When $1\le x\le k$, $|\mathcal{S}|\ge \sum\limits_{j=1}^{x}\binom{n-k-j}{k-j}$.
\end{claim}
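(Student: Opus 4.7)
The plan is to exploit the decomposition $\mathcal{S}=\bigsqcup_{j=1}^{x}\mathcal{C}_{j}$, which follows from \cref{claim:LowerBoundForS} together with the pairwise disjointness of the $\mathcal{C}_{j}$'s noted just before the statement. The claim then reduces to the per-block estimate $|\mathcal{C}_{j}|\ge\binom{n-k-j}{k-j}$ for every $1\le j\le x$, and summing over $j$.

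For $j=1$, by definition $\mathcal{C}_{1}=\binom{[n]\setminus(F_{1}\cup\{p\})}{k-1}$, of size $\binom{n-k-1}{k-1}$. For $2\le j\le x$, I would produce many elements of $\mathcal{C}_{j}$ by a simple hitting-set construction. For each $\ell\in[1,j-1]$ the $k$-sets $F_{\ell}\neq F_{j}$ with $p\notin F_{\ell}$ force $F_{\ell}\setminus F_{j}\neq\emptyset$ and $F_{\ell}\setminus F_{j}\subseteq[n]\setminus(F_{j}\cup\{p\})$, so one can pick any $a_{\ell}\in F_{\ell}\setminus F_{j}$. Setting $B:=\{a_{1},\ldots,a_{j-1}\}$ and $b:=|B|\le j-1$, every $(k-1)$-subset $C\subseteq[n]\setminus(F_{j}\cup\{p\})$ containing $B$ lies in $\mathcal{C}_{j}$, because $\{a_{\ell}\}\subseteq C\cap F_{\ell}$ for every $\ell<j$. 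The number of such $C$ is $\binom{n-k-1-b}{k-1-b}$, and iterating the Pascal inequality $\binom{a}{r}\ge\binom{a-1}{r-1}$ a total of $j-1-b$ times collapses this to $\binom{n-k-j}{k-j}$; every intermediate binomial coefficient is well-defined because $n\ge 2k$ and $j\le k$.

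The main (mild) subtlety I anticipate is that the greedy choices $a_{\ell}$ need not be pairwise distinct: the family $\{F_{\ell}\setminus F_{j}\}_{\ell<j}$ may fail Hall's condition, so one only gets $b\le j-1$ rather than $b=j-1$ in general. The Pascal-monotonicity step is precisely what absorbs this slack without invoking a system of distinct representatives or any structural property of $\mathcal{F}_{0}$ beyond $F_{\ell}\neq F_{j}$. Combining the per-block estimates yields
\begin{equation*}
|\mathcal{S}|=\sum_{j=1}^{x}|\mathcal{C}_{j}|\ge\sum_{j=1}^{x}\binom{n-k-j}{k-j},
\end{equation*}
as desired.
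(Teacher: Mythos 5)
Your proposal is correct and follows essentially the same route as the paper: both decompose $\mathcal{S}$ into the pairwise disjoint blocks $\mathcal{C}_{1},\ldots,\mathcal{C}_{x}$ via \cref{claim:LowerBoundForS} and sum the per-block bounds $|\mathcal{C}_{j}|\ge\binom{n-k-j}{k-j}$, which the paper simply asserts ``by definition.'' Your hitting-set construction together with the Pascal monotonicity $\binom{a}{r}\ge\binom{a-1}{r-1}$ (to absorb possible coincidences among the representatives $a_{\ell}$) is a correct and complete justification of that asserted per-block estimate.
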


By~\cref{claim:LowerBoundForS} and~\cref{claim:SizeOfS}, we can see $|\mathcal{S}|\ge \sum\limits_{j=1}^{k}\binom{n-k-j}{k-j}$ when $x>k$. Indeed, we can strengthen~\cref{claim:SizeOfS} in the following form, which plays a key role when we analyze the extremal structures. Recall that a family $\mathcal{W}$ is called a \emph{sunflower} with $s$ common elements, if there is a set $S\subseteq [n]$ with $|S|=s$ such that for any distinct $W_{i},W_{j}\in\mathcal{W}$, we have $W_{i}\cap W_{j}=S$.

\begin{claim}\label{claim:StabilityForS}
The followings hold.
\begin{enumerate}
    \item[$\textup{(1)}$]  When $1\le x\le k$, $|\mathcal{S}|= \sum\limits_{j=1}^{x}\binom{n-k-j}{k-j}$ if and only if $\mathcal{F}_{0}=\{F_{1},F_{2},\ldots,F_{x}\}$ forms a sunflower with $k-1$ common elements. In particular, if $\mathcal{F}_{0}$ is not a sunflower with $k-1$ common elements, then $|\mathcal{S}|\ge \sum\limits_{j=1}^{x}\binom{n-k-j}{k-j}+\binom{n-k-3}{k-2}$.
    \item[$\textup{(2)}$]  When $k+1\le x\le n-k$, $|\mathcal{S}|= \sum\limits_{j=1}^{k}\binom{n-k-j}{k-j}$ if and only if $\mathcal{F}_{0}=\{F_{1},F_{2},\ldots,F_{x}\}$ forms a sunflower with $k-1$ common elements. In particular, if $\mathcal{F}_{0}$ is not a sunflower with $k-1$ common elements, then $|\mathcal{S}|\ge \sum\limits_{j=1}^{k}\binom{n-k-j}{k-j}+\binom{n-k-3}{k-2}$.
\end{enumerate}
   
\end{claim}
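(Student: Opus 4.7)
The plan is to exploit the disjoint decomposition $\mathcal{S}=\bigsqcup_{i=1}^{x}\mathcal{C}_{i}$ from the discussion preceding the claim and lower-bound each $|\mathcal{C}_{i}|$ individually. Fix an ordering $F_{1},\ldots,F_{x}$ of $\mathcal{F}_{0}$, write $U_{i}=[n]\setminus(F_{i}\cup\{p\})$ of size $n-k-1$, and set $D_{j}=F_{j}\setminus F_{i}$ for $j<i$, which is nonempty since the $F_{j}$'s are distinct. Picking any $a_{j}\in D_{j}$ and letting $A=\{a_{j}:j<i\}$, every $(k-1)$-subset of $U_{i}$ containing $A$ automatically meets each $D_{j}$, yielding $|\mathcal{C}_{i}|\ge\binom{n-k-1-|A|}{k-1-|A|}$. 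Since $n>2k$, the map $t\mapsto\binom{n-k-1-t}{k-1-t}$ is strictly decreasing on $[0,k-1]$, and $|A|\le i-1$, so the estimate becomes $|\mathcal{C}_{i}|\ge\binom{n-k-i}{k-i}$ whenever $i\le k$. Summing over $1\le i\le \min(x,k)$ (and using $|\mathcal{C}_{i}|\ge 0$ for $i>k$ in part (2)) yields the raw inequality $|\mathcal{S}|\ge\sum_{j=1}^{\min(x,k)}\binom{n-k-j}{k-j}$.

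Equality in the sunflower case is a direct computation: if $F_{j}=Y\cup\{a_{j}\}$ with $|Y|=k-1$ and distinct petals $a_{j}$, then $D_{j}=\{a_{j}\}$ for $j\ne i$, so $\mathcal{C}_{i}$ is precisely the collection of $(k-1)$-subsets of $U_{i}$ containing the $i-1$ distinct elements $a_{1},\ldots,a_{i-1}$. This has size $\binom{n-k-i}{k-i}$ for $i\le k$ and is empty for $i>k$, and summation reproduces the stated values in both parts.

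For the stability step, suppose $\mathcal{F}_{0}$ is not a sunflower with $k-1$ common elements, and split into two cases. \textbf{Case A:} there exist $F,F'\in\mathcal{F}_{0}$ with $|F\cap F'|\le k-2$. Place these as $F_{1},F_{2}$; then $|D_{1}|=|F_{1}\setminus F_{2}|\ge 2$, and a direct complementary count gives
\begin{equation*}
|\mathcal{C}_{2}|=\binom{n-k-1}{k-1}-\binom{n-k-1-|D_{1}|}{k-1}\ge\binom{n-k-2}{k-2}+\binom{n-k-3}{k-2},
\end{equation*}
providing the required excess $\binom{n-k-3}{k-2}$. \textbf{Case B:} every pair in $\mathcal{F}_{0}$ intersects in exactly $k-1$ elements; since $\mathcal{F}_{0}$ is not a sunflower, a short inductive argument shows that some triple $F_{a},F_{b},F_{c}\in\mathcal{F}_{0}$ has $|F_{a}\cap F_{b}\cap F_{c}|\le k-2$. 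Reorder so that these are $F_{1},F_{2},F_{3}$; because $|F_{j}\cap F_{3}|=k-1$, each $D_{j}$ ($j=1,2$) is a singleton $\{a_{j}\}$, and combining $|F_{1}\cap F_{2}|=k-1$ with $|F_{1}\cap F_{2}\cap F_{3}|\le k-2$ forces $a_{1}=a_{2}$. Hence every transversal in $\mathcal{C}_{3}$ must contain this single element, so
\begin{equation*}
|\mathcal{C}_{3}|=\binom{n-k-2}{k-2}=\binom{n-k-3}{k-3}+\binom{n-k-3}{k-2},
\end{equation*}
again producing the excess $\binom{n-k-3}{k-2}$.

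The main obstacle is the structural analysis in Case B: one must verify both that non-sunflowers with all pairwise intersections of size $k-1$ always contain a triple with $3$-wise intersection at most $k-2$ (proved by assuming otherwise and inductively extending the common $(k-1)$-element core of three sets to all of $\mathcal{F}_{0}$, contradicting the non-sunflower hypothesis), and that for such a triple the two ``defect'' singletons $F_{1}\setminus F_{3}$ and $F_{2}\setminus F_{3}$ must coincide. Once these elementary observations are in place, the universal bound $|\mathcal{C}_{i}|\ge\binom{n-k-i}{k-i}$ for the remaining indices, together with the extra $\binom{n-k-3}{k-2}$ contributed by $\mathcal{C}_{2}$ in Case A or $\mathcal{C}_{3}$ in Case B, gives
\begin{equation*}
|\mathcal{S}|\ge\sum_{j=1}^{\min(x,k)}\binom{n-k-j}{k-j}+\binom{n-k-3}{k-2},
\end{equation*}
completing both parts of the claim.
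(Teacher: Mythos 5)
Your proposal is correct and follows essentially the same route as the paper: the same disjoint decomposition $\mathcal{S}=\bigsqcup_i\mathcal{C}_i$, the same exact count in the sunflower case, and the same source of the excess term (a pair with intersection of size at most $k-2$ forces $|\mathcal{C}_2|\ge\binom{n-k-2}{k-2}+\binom{n-k-3}{k-2}$, and otherwise a triple with small three-wise intersection forces $|\mathcal{C}_3|=\binom{n-k-2}{k-2}$). The only difference is organizational: you locate a bad triple directly and reorder it to the front, which replaces the paper's two-subcase analysis around $B=F_1\cap F_2$ (whose second subcase is a pure contradiction) with a slightly cleaner single computation.
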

\begin{poc}
We focus on the case when $1\le x\le k$ and one can apply the almost identical argument to show the remaining cases. On one hand, if for any distinct $i,j\in[x]$, $F_{i}\cap F_{j}=A$ for some $A\subseteq \binom{[n]}{k-1}$, then for each $i\in [x]$, set $F_{i}=A\cup\{a_{i}\}$, it is easy to calculate $|\mathcal{C}_{\ell}|=\binom{n-k-\ell}{k-\ell}$ for each $\ell\in [x]$. This implies that in this case, $|\mathcal{S}|= \sum\limits_{j=1}^{x}\binom{n-k-j}{k-j}$.

On the other hand, we need to show that if $|\mathcal{S}|= \sum\limits_{j=1}^{x}\binom{n-k-j}{k-j}$, then $\mathcal{F}_{0}$ has to be a sunflower with $k-1$ common elements. First suppose that there exists a pair of sets $F_{i},F_{j}\in\mathcal{F}_{0}$ such that $|F_{i}\cap F_{j}|\le k-2$, by suitable re-labelling, we can assume $|F_{1}\cap F_{2}|\le k-2$. Then there exist two elements in $F_{1}\setminus F_{2}$, denoted by $\{a_{1},a_{2}\}\subseteq F_{1}\setminus F_{2}$. Then we can see the size of $\mathcal{C}_{2}$ is at least $\binom{n-k-2}{k-2}+\binom{n-k-3}{k-2}$, since one can pick $\binom{n-k-2}{k-2}$ sets that contain $a_{1}$, and at least $\binom{n-k-3}{k-2}$ sets that do not contain $a_{1}$. Therefore if $|\mathcal{S}|= \sum\limits_{j=1}^{x}\binom{n-k-j}{k-j}$ for any distinct $i,j\in [x]$, we can assume that $|F_{i}\cap F_{j}|=k-1$. When $x=2$, then we are already done. When $x\ge 3$, we set $B:=F_{1}\cap F_{2}$, $F_{1}=B\cup\{b_{1}\}$ and $F_{2}=B\cup\{b_{2}\}$. Then it suffices to show for any distinct $\ell,m\in [x]$, $F_{\ell}\cap F_{m}=B$.
    \begin{Case}
     \item If $\{\ell,m\}\cap\{1,2\}\neq\emptyset$, then by symmetry and suitable re-labelling, it suffices to consider the case that $\ell=1$ and $m=3\in [x]\setminus\{1,2\}$. Suppose that $F_{1}\cap F_{3}\neq B$, then there is some $b\in B$ such that $b\notin F_{3}$, which yields that $b\in (F_{1}\setminus F_{3})\cap (F_{2}\setminus F_{3})$. Moreover since $|F_{3}|=k$ and $|F_{3}\cap F_{1}|=|F_{3}\cap F_{2}|=k-1$, then we have $F_{3}=\{b_{1},b_{2}\}\cup B\setminus\{b\}$. Therefore we can see $|\mathcal{C}_{3}|\ge\binom{n-k-2}{k-2}=\binom{n-k-3}{k-2}+\binom{n-k-3}{k-3}>\binom{n-k-3}{k-3}$, which yields $|\mathcal{S}|\ge \sum\limits_{j=1}^{x}\binom{n-k-j}{k-j}+\binom{n-k-3}{k-2}$, a contradiction.
     \item If $\{\ell,m\}\cap\{1,2\}=\emptyset$, by symmetry we can assume that $\ell=3$ and $m=4$. Suppose that $F_{3}\cap F_{4}=D\neq B=F_{1}\cap F_{2}$, set $F_{3}:=D\cup \{d_{3}\}$ and $F_{4}=D\cup \{d_{4}\}$. Since $|F_{1}\cap F_{3}|=k-1$ and $B\neq D$, we have $b_{1}=d_{3}$, similarly since $|F_{1}\cap F_{4}|=k-1$ and $B\neq D$, we have $b_{1}=d_{4}$. However, this implies that $F_{3}=F_{4}$, a contradiction.
    \end{Case}
    This finishes the proof. One can apply the same argument to show the statement in~$\textup{(2)}$, we omit the repeated details.
\end{poc}

Recall that $|\mathcal{S}|\ge \sum\limits_{j=1}^{k}\binom{n-k-j}{k-j}$ when $x>k$.  We then define the function $g(x)$ to be
\[
g(x) =
\begin{cases}
\sum\limits_{j=1}^{x} \binom{n-k-j}{k-j}, & \text{if } 1 \leq x \leq k, \\
\sum\limits_{j=1}^{k} \binom{n-k-j}{k-j}, & \text{if } k+1 \leq x \leq n-k.
\end{cases}
\]
Next we will carefully determine the values at which the function $f(x):=g(x)-x$ attains its minimum. Recall that $|S|-|\mathcal{F}_{0}|\ge f(|\mathcal{F}_{0}|)$, then to obtain the $t$-th level of stability result for Erd\H{o}s-Ko-Rado theorem, we need to understand the exact values of $h(t) = \min \limits_{t \leq x \leq n-k} f(x)$ since we have $|\mathcal{F}|\le\binom{n-1}{k-1}-h(t)$ when $t\le x\le n-k$.

\begin{claim}\label{claim:SuanTMD}
    For given $1\le t\le n-k$, then the followings hold.
    \begin{itemize}
        \item If $k\ge t+3$, then $h(t)=f(t)<\min \limits_{x\neq t} f(x)$.
        \item If $k= t+2$, then $h(t)=f(t)=f(n-k)< \min \limits_{x\notin \{t,n-k\}} f(x)$.
        \item If $ k\le t+1$, then $h(t)=f(n-k)<\min \limits_{x\neq n-k} f(x)$.
    \end{itemize}
\end{claim}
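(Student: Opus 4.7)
My plan is to reduce the claim to analyzing the shape of the piecewise function $f(x) := g(x)-x$ through its forward differences, and then comparing the two endpoint values $f(t)$ and $f(n-k)$ of the interval $[t,n-k]$. First I would compute $f(x+1)-f(x) = \binom{n-k-x-1}{k-x-1}-1$ for $1 \le x \le k-1$ and $f(x+1)-f(x) = -1$ for $k \le x \le n-k-1$ directly from the piecewise definition of $g$. Under the standing hypothesis $n > 2k$, the first expression equals $0$ precisely at $x=k-1$ (since $\binom{n-2k}{0}=1$) and is strictly positive for all smaller $x$ because $\binom{n-k-x-1}{k-x-1}\ge n-2k+1\ge 2$ whenever $x\le k-2$. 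So $f$ is strictly increasing on $[1,k-1]$, satisfies $f(k-1)=f(k)$, and is strictly decreasing by $1$ at each step on $[k,n-k]$; in particular any minimum of $f$ on $[t,n-k]$ is attained at one of the two endpoints $t$ or $n-k$.

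Next I would telescope these differences (together with $f(n-k)-f(k) = -(n-2k)$) to obtain a closed form for the key comparison
$$\Delta(s) := f(n-k)-f(k-s) = \sum_{i=1}^{s-1}\binom{n-2k+i}{i} - (n-2k+s-1)$$
valid for $1 \le s \le k-1$. A direct evaluation gives $\Delta(1) = -(n-2k) < 0$ and $\Delta(2) = (n-2k+1)-(n-2k+1) = 0$. The increment satisfies $\Delta(s+1)-\Delta(s) = \binom{n-2k+s}{s}-1 \ge \binom{n-2k+2}{2}-1\ge 2$ for every $s\ge 2$, so $\Delta$ is strictly increasing past $s=2$ and hence $\Delta(s) > 0$ for all $s\ge 3$.

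The three items of the claim then follow by bookkeeping on $s := k-t$. When $k\ge t+3$ one has $s\ge 3$, so $f(t) < f(n-k)$, and combining this with the strict monotonicity of $f$ on $[t,k-1]$ and on $[k,n-k]$ gives $f(t) < f(x)$ for every other $x$ in the range. When $k = t+2$ one has $s=2$ and $\Delta(2)=0$, producing $f(t) = f(n-k)$ with every intermediate value strictly larger by the two monotonicities. When $k\le t+1$ either $t = k-1$ (handled by $\Delta(1) < 0$) or $t\ge k$ (where $f$ is already strictly decreasing on the whole of $[t,n-k]$); both sub-cases yield $h(t) = f(n-k)$ uniquely.

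The main obstacle is the precise calibration of $\Delta(s)$: the fact that $\Delta$ vanishes exactly at $s = 2$ (and nowhere else) is what produces the boundary case $k = t+2$ and, in combination with~\cref{claim:StabilityForS}, the additional extremal family $\mathcal{M}_{t+2}$ appearing in~\cref{thm:AllLayerStability}. Tracking the strict versus non-strict inequality at this threshold is the delicate part; every other step reduces to routine monotonicity checks once the shape of $f$ has been established.
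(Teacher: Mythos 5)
Your proposal is correct and follows essentially the same route as the paper: establish that $f$ increases on $[1,k]$ (weakly only at the last step) and strictly decreases on $[k,n-k]$, so the minimum sits at an endpoint, and then sign-analyze $f(n-k)-f(t)$. The only cosmetic difference is that the paper collapses your telescoped quantity $\Delta(s)$ via the hockey-stick identity into the closed form $f(n-k)-f(t)=\binom{n-k-t}{k-t-1}-\binom{n-k-t}{1}$ and reads off the sign directly, whereas you track the increments of $\Delta$; both calibrations locate the threshold at $k=t+2$ in the same way.
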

\begin{poc}
 A straightforward calculation shows that $f(x)$ with $x\in [1,n-k]$  is monotonically increasing when $x\in [1,k]$ and monotonically decreasing when $x\in [k,n-k]$. Therefore $h(t)$ is either $f(t)$ or $f(n-k)$. Note that $f(n-k)-f(t)=\binom{n-k-t}{k-(t+1)}-\binom{n-k-t}{1}$, then the claim follows by computing the exact values directly.
\end{poc}

For given positive integer $x$, let $\mathcal{F}(x)$ be the largest intersecting family among all $\mathcal{F}\subseteq\binom{[n]}{k}$ with $|\mathcal{F}_{0}|=x$. Let $d_{\textup{max}}(x)$ represent the maximum degree of $\mathcal{F}(x)\subseteq\binom{[n]}{k}$. Observe that for any $x\in\mathbb{N}$, $|\mathcal{F}(x)|=x+d_{\textup{max}}(x)$. The following observation is crucial.

\begin{claim}\label{claim:MonotoneInT}
    If $x_{1}>x_{2}$, $d_{\textup{max}}(x_{1})\le d_{\textup{max}}(x_{2})$.
\end{claim}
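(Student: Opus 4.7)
The plan is to prove this by a direct subfamily comparison: starting from an extremal family for $x_1$, I will delete exactly $x_1-x_2$ sets from its $\mathcal{F}_{0}$-part to produce a smaller intersecting family with $|\mathcal{F}_{0}|=x_2$ that retains the same maximum degree, and then invoke the maximality that defines $\mathcal{F}(x_2)$.

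Concretely, let $p\in[n]$ be an element attaining the maximum degree in $\mathcal{F}(x_1)$, and set $\mathcal{F}_{0}(x_1):=\{F\in\mathcal{F}(x_1):p\notin F\}$, which by definition has exactly $x_1$ members. Choose any sub-collection $\mathcal{F}_{0}'\subseteq \mathcal{F}_{0}(x_1)$ with $|\mathcal{F}_{0}'|=x_2$, and define
\[
\mathcal{F}':=\bigl(\mathcal{F}(x_1)\setminus \mathcal{F}_{0}(x_1)\bigr)\cup \mathcal{F}_{0}'.
\]
As a subfamily of the intersecting family $\mathcal{F}(x_1)$, the family $\mathcal{F}'$ is still intersecting, and by construction $|\mathcal{F}'|=x_2+d_{\textup{max}}(x_1)$.

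The single point that requires a brief check is that $p$ remains a maximum-degree element of $\mathcal{F}'$ with degree exactly $d_{\textup{max}}(x_1)$. Since every deleted set lies in $\mathcal{F}_{0}(x_1)$ and therefore does not contain $p$, the degree of $p$ is preserved by the deletion; meanwhile, the degree of every other element can only have weakly decreased from its value in $\mathcal{F}(x_1)$, which was already bounded by $d_{\textup{max}}(x_1)$. Hence $p$ still attains the maximum degree in $\mathcal{F}'$, and, designating $p$ as the distinguished max-degree element, we have $\{F\in\mathcal{F}':p\notin F\}=\mathcal{F}_{0}'$ of size $x_2$. So $\mathcal{F}'$ is a legitimate competitor in the maximization defining $\mathcal{F}(x_2)$, and by that maximality,
\[
x_2+d_{\textup{max}}(x_1)=|\mathcal{F}'|\le |\mathcal{F}(x_2)|=x_2+d_{\textup{max}}(x_2),
\]
which rearranges to the desired inequality $d_{\textup{max}}(x_1)\le d_{\textup{max}}(x_2)$.

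There is no serious obstacle here; the only mild subtlety is the possibility that after deletion some element $q\neq p$ ties $p$ in degree. This is harmless because the definition only requires $p$ to be \emph{some} element attaining the maximum degree, so even in the presence of ties we may take $p$ as the distinguished element when verifying the constraint $|\mathcal{F}_{0}'|=x_2$.
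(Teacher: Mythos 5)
Your proof is correct and follows essentially the same route as the paper: both arguments delete $x_1-x_2$ sets from the $\mathcal{F}_0$-part of an extremal family $\mathcal{F}(x_1)$ and compare the result against the maximality defining $\mathcal{F}(x_2)$, the only difference being that the paper phrases this as a contradiction while you argue directly. Your extra check that $p$ remains a maximum-degree element after the deletion is a detail the paper leaves implicit, and it is handled correctly.
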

\begin{poc}
    Suppose that $x_{1}>x_{2}$ and $|\mathcal{F}(x_{1})|> |\mathcal{F}(x_{2})|+(x_{1}-x_{2})$, then one can remove $x_{1}-x_{2}$ many sets from $\mathcal{F}(x_{1})$ to obtain an intersecting family $\mathcal{F}$ of size larger than $|\mathcal{F}(x_{2})|$ with $|\mathcal{F}_{0}|=x_{2}$, a contradiction to the definition of $\mathcal{F}(x_{2})$. 
\end{poc}

\subsection{Stability at arbitrary level: Proof of~\cref{thm:AllLayerStability}}\label{section:ProofS}
With the new framework in hand, we then prove the $t$-level stability result for Erd\H{o}s-Ko-Rado theorem. Based on~\cref{thm:HiltonMilner,thm:Han,thm:Peng}, we then consider the case of $t\ge 4$. Let $\mathcal{F}\subseteq\binom{[n]}{k}$ be a non-trivial intersecting family and $\mathcal{F}\not\subseteq\mathcal{M}_{k,t_{0}-1}$ for any $2\le t_{0}\le t$. Let $\mathcal{F}_{0}:=\{F_{1},F_{2},\ldots,F_{x}\}$. Since when $|\mathcal{F}_{0}|\le 1$, $\mathcal{F}$ is a star, or a sub-family of $\mathcal{M}_{k+1}$, it suffices to consider the case of $|\mathcal{F}_{0}|\ge 2$.
   \begin{claim}\label{claim:LargeT}
      $\min\limits_{2\le |\mathcal{F}_{0}|\le t-1}\{|\mathcal{S}|-|\mathcal{F}_{0}|\} >\min\limits_{t\le |\mathcal{F}_{0}|\le n-k}\{|\mathcal{S}|-|\mathcal{F}_{0}|\}$. 
   \end{claim}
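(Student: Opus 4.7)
The plan is to reduce each side of the inequality to a minimum of the explicit function $f(x) = g(x) - x$ and then compare the two using \cref{claim:SuanTMD}. For the left-hand side, since $k \ge t+2$ forces the whole range $[2, t-1]$ to sit inside $[1, k]$, \cref{claim:SizeOfS} applies directly and gives $|\mathcal{S}| \ge g(|\mathcal{F}_0|)$, hence $|\mathcal{S}| - |\mathcal{F}_0| \ge f(|\mathcal{F}_0|)$. Taking the minimum over $x \in [2, t-1]$ yields
\[
\min_{2 \le |\mathcal{F}_0| \le t-1}\{|\mathcal{S}| - |\mathcal{F}_0|\} \;\ge\; \min_{2 \le x \le t-1} f(x).
\]

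For the right-hand side I would exhibit a single admissible family with $|\mathcal{F}_0| = t$ that realises the lower bound of \cref{claim:SizeOfS} with equality. The natural candidate is $\mathcal{F} = \mathcal{M}_{k,t}$: by its definition the sub-family $\mathcal{F}_0$ consists of the $t$ sets $[2,k] \cup \{k+j\}$ for $j \in [t]$, which form a sunflower with $k-1$ common elements. Then the equality case of \cref{claim:StabilityForS} (or, equivalently, a direct computation showing $|\mathcal{C}_\ell| = \binom{n-k-\ell}{k-\ell}$ for each $1 \le \ell \le t$) produces $|\mathcal{S}| = g(t)$ and hence $|\mathcal{S}| - |\mathcal{F}_0| = f(t)$. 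Consequently the right-hand side is at most $f(t)$.

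To close the argument I need the strict inequality $\min_{2 \le x \le t-1} f(x) > f(t)$, which is essentially \cref{claim:SuanTMD} specialised to the interval $[2, t-1]$. Under the hypothesis $k \ge t+2$ of \cref{thm:AllLayerStability}, only the first two bullets of \cref{claim:SuanTMD} can occur. When $k \ge t+3$, the bullet gives $f(t) < f(x)$ for every $x \ne t$ in $[1, n-k]$; since $t \notin [2, t-1]$ the strict inequality is automatic. When $k = t+2$ the claim only excludes $x \in \{t, n-k\}$, so one additionally has to verify $n - k \notin [2, t-1]$, i.e.\ $n - k > t - 1$; this is a short calculation using $n > \tfrac{(5+\sqrt{5})k-7}{2}$ and $k = t+2$, and is where the quantitative assumption on $n$ actually gets used. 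Putting these three steps together gives
\[
\mathrm{LHS} \;\ge\; \min_{2 \le x \le t-1} f(x) \;>\; f(t) \;\ge\; \mathrm{RHS},
\]
as required. The only non-routine point in the whole plan is the boundary check $n - k > t - 1$ in the case $k = t+2$; everything else is a direct invocation of the quantitative bounds already established.
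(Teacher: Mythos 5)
Your plan breaks down at the final comparison, and the failure is not a boundary technicality but a sign error in how you read \cref{claim:SuanTMD}. The proof of that claim shows $f(x)=g(x)-x$ is \emph{monotonically increasing} on $[1,k]$; the strict inequalities in its statement concern the minimum over the range $[t,n-k]$ only. Since $k\ge t+2$ puts the whole interval $[2,t-1]$ inside $[1,k]$, you actually have $f(2)<f(3)<\cdots<f(t)$, so
\[
\min_{2\le x\le t-1} f(x)=f(2)=\binom{n-k-1}{k-1}+\binom{n-k-2}{k-2}-2\;<\;f(t),
\]
which is the opposite of what you need. Concretely, a family whose $\mathcal{F}_0$ consists of two sets forming a sunflower with $k-1$ common elements attains $|\mathcal{S}|-|\mathcal{F}_0|=f(2)<f(t)$, so the naive bound $|\mathcal{S}|-|\mathcal{F}_0|\ge f(|\mathcal{F}_0|)$ from \cref{claim:SizeOfS} cannot prove the claim; the statement is simply false without the structural hypothesis on $\mathcal{F}$.

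The missing idea is that for $2\le|\mathcal{F}_0|\le t-1$ the hypothesis $\mathcal{F}\not\subseteq\mathcal{M}_{k,t_0-1}$ for all $2\le t_0\le t$ forces $\mathcal{F}_0$ \emph{not} to be a sunflower with $k-1$ common elements (otherwise $\mathcal{F}\subseteq\mathcal{M}_{k,|\mathcal{F}_0|}$ with $|\mathcal{F}_0|+1\le t$). This lets you invoke the strengthened bound in \cref{claim:StabilityForS}, which adds $\binom{n-k-3}{k-2}$ to the lower bound on $|\mathcal{S}|$, giving $|\mathcal{S}|-|\mathcal{F}_0|\ge\binom{n-k-1}{k-1}+\binom{n-k-2}{k-2}+\binom{n-k-3}{k-2}-2$ on the left-hand range. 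One then checks that this exceeds $f(t)$ using $\sum_{j=3}^{t}\binom{n-k-j}{k-j}\le\binom{n-k-2}{k-3}$, i.e.\ that $\binom{n-k-3}{k-2}-2>\binom{n-k-2}{k-3}-t$; this last step, not your boundary check $n-k>t-1$, is where the assumption $n>\frac{(5+\sqrt{5})k-7}{2}$ is actually consumed. Your treatment of the right-hand side (bounding it above by $f(t)$ via $\mathcal{M}_{k,t}$) is fine and consistent with the paper's $h(t)=f(t)$, but the left-hand side needs the non-sunflower upgrade to go through.
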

   \begin{poc}
    When $x=|\mathcal{F}_{0}|\le t-1$, under the assumption that $\mathcal{F}\not\subseteq\mathcal{M}_{k,t_{0}-1}$ for any $2\le t_{0}\le t$, by the definition of $\mathcal{M}_{k,t_{0}-1}$, we can see $\mathcal{F}_{0}$ is not a sunflower with $k-1$ common elements. Recall that $\mathcal{C}_{1}=\{C\subseteq\binom{[n]}{k-1}:p\notin C,C\cap F_{1}=\emptyset\}$, and for each $2\le i\le |\mathcal{F}_{0}|$,
   \begin{equation*}
    \mathcal{C}_{i}:=\bigg\{C\subseteq\binom{[n]}{k-1}:p\notin C, C\cap F_{i}=\emptyset, C\cap F_{j}\neq\emptyset\ \textup{for\ any\ }1\le j\le i-1  \bigg\}.
\end{equation*}
Under the condition that $\mathcal{F}_{0}$ is not a sunflower with $k-1$ common elements, we then take advantage of the proof of~\cref{claim:StabilityForS}. If $x=2$, then $|\mathcal{C}_{2}|\ge\binom{n-k-2}{k-2}+\binom{n-k-3}{k-2}$, which yields that $|\mathcal{S}|-|\mathcal{F}_{0}|\ge |\mathcal{C}_{1}|+|\mathcal{C}_{2}|-2 \ge \binom{n-k-1}{k-1}+\binom{n-k-2}{k-2}+\binom{n-k-3}{k-2}-2$. If $x\ge 3$, we can see that $|\mathcal{C}_{1}|+|\mathcal{C}_{2}|+|\mathcal{C}_{3}|\ge \binom{n-k-1}{k-1}+\binom{n-k-2}{k-2}+\binom{n-k-3}{k-3}+\binom{n-k-3}{k-2}$. Moreover, since for any $3\le i\le |\mathcal{F}_{0}|$, $|\mathcal{C}_{i}|\ge\binom{n-k-i}{k-i}\ge 1$, we then conclude that
\begin{equation*}
    \min\limits_{2\le | \mathcal{F}_{0}  | \le t-1}\left \{ |\mathcal{S}|- | \mathcal{F}_{0}  |\right \} \ge \binom{n-k-1}{k-1}+\binom{n-k-2}{k-2}+\binom{n-k-3}{k-2}-2. 
\end{equation*}
Since $k\ge t+2$, by~\cref{claim:SuanTMD}, we have $\min\limits_{ t\le |\mathcal{F}_{0}|\le n-k}\{|\mathcal{S}|-|\mathcal{F}_{0}|\}=\sum\limits_{j=1}^{t}\binom{n-k-j}{k-j}-t$. Note that
\begin{align*}
   \binom{n-k-1}{k-1} +& \binom{n-k-2}{k-2} + \binom{n-k-3}{k-2} - 2 
   \quad - \left(\sum\limits_{j=1}^{t}\binom{n-k-j}{k-j} - t\right) \\ & \geq \binom{n-k-3}{k-2} - 2- \left(\binom{n-k-2}{k-3} - t\right),
\end{align*}
where we take advantage of $\sum\limits_{j=3}^{t}\binom{n-k-j}{k-j}\le \sum\limits_{j=3}^{k}\binom{n-k-j}{k-j}=\binom{n-k-2}{k-3}$. Note that when $n>\frac{(5+\sqrt{5})k-7}{2}$, it is easy to check that $\binom{n-k-3}{k-2} - 2- (\binom{n-k-2}{k-3} - t)$ is strictly larger than $0$. This finishes the proof.
   \end{poc}
By~\cref{claim:LargeT}, it then suffices to consider the case when $x=|\mathcal{F}_{0}|\ge t$. If $x\ge n-k+1$, by~\cref{claim:MonotoneInT}, we have $d_{\textup{max}}(x)\le d_{\textup{max}}(n-k)= \binom{n-1}{k-1}-\sum\limits_{j=1}^{k}\binom{n-k-j}{k-j}=d_{\textup{max}}(\mathcal{M}_{k})$, where we take advantage of the formulas $\binom{n-1}{k-1}=\sum_{j=1}^{k-1}  \binom{n-j-1}{k-2} +\binom{n-k}{k-1}$  and  $\binom{n-k}{k-1}=\sum_{j=1}^{k} \binom{n-k-j}{k-j}.$ Therefore by~\cref{thm:Frankl} in this case we have $|\mathcal{F}|\le|\mathcal{M}_{k}|=\sum\limits_{j=2}^{k}\binom{n-j}{k-1}+n-k$. Moreover,~\cref{thm:Frankl} states that the equality holds if and only if $\mathcal{F}$ is isomorphic to $\mathcal{M}_{k}$. However, in this case we assume that $x=|\mathcal{F}_{0}|>n-k$, therefore $\mathcal{F}$ cannot be isomorphic to $\mathcal{M}_{k}$, which yields that $|\mathcal{F}|<|\mathcal{M}_{k}|=\binom{n-1}{k-1}-f(n-k)$.

By definition of $h(t)$, we can see $h(t)\le f(n-k)$, it then remains to consider the case when $t\le x\le n-k$. Recall that in this case we have $|\mathcal{F}|\le\binom{n-1}{k-1}-h(t)$, we then consider the following cases based on~\cref{claim:SuanTMD}.

\begin{Case}
    \item If $k=t+2$, by~\cref{claim:SuanTMD} we have $h(t)=f(t)=f(n-t-2)$. We then determine the extremal structures when $|\mathcal{F}|=\binom{n-1}{k-1}-h(t)$, according to the size of $\mathcal{F}_{0}$.
    \begin{Case}
        \item If $\mathcal{F}_{0}=\{F_{1},F_{2},\ldots,F_{t}\}$, by~\cref{claim:StabilityForS}, $\mathcal{F}_{0}$ is a sunflower with $t+1$ common elements. We denote $A=\bigcap\limits_{i=1}^{t}F_{i}=\{a_{1},a_{2},\ldots,a_{t+1}\}$, and $F_{\ell}\setminus A=\{b_{\ell}\}$ for each $\ell\in [t]$. Then for any $F\in\mathcal{F}_{1}$, if $F\cap A=\emptyset$, then $\{b_{1},b_{2},\ldots,b_{t}\}\subseteq\mathcal{F}$, therefore $\mathcal{F}_{1}=\{G:\binom{[n]}{t+2}:p\in G,G\cap A\neq\emptyset\}\cup\{G\in\binom{[n]}{t+2}:\{b_{1},b_{2},\ldots,b_{t},p\}\in G\}$. Since $\mathcal{F}_{0}$ is a sunflower with $t+1$ common elements, we can see $\mathcal{F}$ is isomorphic to $\mathcal{M}_{t+2,t}$ in this case, as desired.
        \item If $|\mathcal{F}_{0}|=n-t-2$, we can see $d_{\textup{max}}(n-t-2)\le d_{\textup{max}}(\mathcal{M}_{t+2})$, then by~\cref{thm:Frankl}, $|\mathcal{F}|\le|\mathcal{M}_{t+2}|$. Moreover, the equality holds if and only if $\mathcal{F}$ is isomorphic to $\mathcal{M}_{t+2}$, as desired.
    \end{Case}
    \item When $k\ge t+3$, by~\cref{claim:SuanTMD}, we have $h(t)=f(t)=\sum\limits_{j=1}^{t}\binom{n-k-j}{k-j}-t$. Then it suffices to consider the case when $|\mathcal{F}_{0}|=t$. By an almost identical argument as that in Subcase 1.1, we can see $\mathcal{F}$ is isomorphic to $\mathcal{M}_{k,t}$, we omit the details here.
\end{Case}
This finishes the proof. 

\section{Concluding remarks}
In this paper, we focus on developing a unified framework for deriving stability results for the celebrated Erd\H{o}s-Ko-Rado theorem using a robust linear algebraic approach. To illustrate our main ideas, we first present a slightly weaker version that applies to arbitrary levels, extending beyond previous results in~\cite{2017PAMSHanJie,1967Hilton,2024EUJCPeng,2017PAMSMubayi}. In~\cref{thm:AllLayerStability}, we impose two stronger assumptions: $k \ge t + 2$ and $n > \frac{(5+\sqrt{5})k - 7}{2}$. Indeed, by carefully extending the arguments in the proofs of~\cref{claim:StabilityForS},~\cref{claim:SuanTMD} and~\cref{claim:LargeT}, the case under the natural conditions $n \ge 2k + 1$ and $k \ge t + 1$ can be readily analyzed. In fact, we provide an alternative proof of~\cref{thm:Han} in the Appendix, While it is feasible to further extend our methods to give a full proof of~\cref{thm:Peng}, we do not pursue it here. 
It is worth noting that in the Appendix, where we prove~\cref{thm:Han}, the case $k=3=2+1$ is relatively more complicated. Similarly, when proving the complete stability of~\cref{thm:Peng} (i.e., for the third level), we find that the case $k=4=3+1$ is also more intricate. However, interestingly, when $t\ge 4$ the case $k=t+1$ actually becomes simpler. Although we do not fully present this proof in the paper, we encourage interested readers to explore this phenomenon independently.

We believe that a more interesting direction for research is to continue exploring the potential of this robust linear algebra method to obtain more stability results more efficiently.

\section*{Acknowledgement}
Zixiang Xu would like to thank Prof. Hao Huang, Dr. Yongtao Li, Prof. Hong Liu, Prof. Benjian Lv, and Prof. Jian Wang for their valuable discussion during the early stages of this work over the past two years. In particular, he thanks Dr. Yongtao Li for providing the reference~\cite{2006PoLyEKR} and Prof. Jian Wang for informing him of the results in~\cite{2024WangEUJC, 2018Kupa}.

\bibliographystyle{abbrv}
\bibliography{HMProof}

\section*{Appendix: An alternative proof of~\cref{thm:Han}}\label{section:alter}
Since when $|\mathcal{F}_{0}|\le 1$, $\mathcal{F}$ is a star, or a sub-family of $\mathcal{M}_{k+1}$, it suffices to consider the case when $|\mathcal{F}_{0}|\ge 2$. By~\cref{claim:MonotoneInT}, for any $x> n-k$, we have $d_{\textup{max}}(x)\le d_{\textup{max}}(n-k)\le \binom{n-1}{k-1}-\sum_{i=1}^{k}\binom{n-k-i}{k-i}=d_{\textup{max}}(\mathcal{M}_{k})$, which yields $|\mathcal{F}|\le |\mathcal{M}_{k}|=\sum_{j=2}^{k}\binom{n-j}{k-1}+n-k$. Moreover, by~\cref{thm:Frankl} the equality holds if and only if $\mathcal{F}$ is isomorphic to $\mathcal{M}_{k}$, in particular, when $k=4$, it could be isomorphic to $\mathcal{M}_{3}$. Therefore, generally in this case except $k=4$, when $x>n-k$, $\mathcal{F}$ cannot be isomorphic to $\mathcal{M}_{k}$, which implies $|\mathcal{F}|<|\mathcal{M}_{k}|$. Additionally, when $k=4$, if $|\mathcal{F}|=|\mathcal{M}_{4}|$, $\mathcal{F}$ can be isomorphic to $\mathcal{M}_{3}$.

When $2 \le x\le n-k$, note that $|\mathcal{F}|\le\binom{n-1}{k-1}-h(2)$, we consider the following cases.
    \begin{Case}
        \item When $k=3$, by~\cref{claim:SuanTMD}, $h(2)=f(n-3)$, then $|\mathcal{F}|\le\binom{n-1}{2}-f(n-3)\le\binom{n-1}{2}-(\binom{n-4}{2}+\binom{n-5}{1}+1)+(n-3)=\binom{n-1}{2}-\binom{n-4}{2}+1$. Moreover, when $|\mathcal{F}_{0}|=n-3$ and the above equality holds, then by~\cref{claim:StabilityForS} we can see that $\mathcal{F}_{0}$ is a sunflower with $2$ core elements $a_{1},a_{2}$. Since $\mathcal{F}$ is $3$-uniform and $n-3>3$, then any $F\in\mathcal{F}\setminus \mathcal{F}_{0}$ must contain either $a_{1}$ or $a_{2}$, therefore $\mathcal{F}=\{F\in\binom{[n]}{3}:|F\cap\{a_{1},a_{2},p\}|\ge 2\}$. Indeed, this configuration is an extremal structure in~\cref{thm:HiltonMilner}, which is a contradiction to the assumption.
\begin{Case}
    \item When $|\mathcal{F}_{0}|=2$, by~\cref{claim:SizeOfS}, $|\mathcal{F}|\le \binom{n-1}{2}-|\mathcal{S}|+2 \le \binom{n-1}{2} -\binom{n-4}{2}-\binom{n-5}{1}+2=2n-2$. By~\cref{claim:StabilityForS} the equality holds if and only if $\mathcal{F}_{0}$ is a sunflower with $2$ common elements, therefore, $|\mathcal{F}|=2n-2$ if and only if $\mathcal{F}$ is isomorphic to $\mathcal{M}_{3,2}$.
    \item When $3\le|\mathcal{F}_{0}|\le n-3$, note that $\mathcal{F}$ is not a sub-family of extremal structures in~\cref{thm:HiltonMilner}, namely $\mathcal{M}_{4}$ or $\mathcal{M}_{3}$. Suppose that $\mathcal{F}_{0}$ is a sunflower with $2$ common elements, say $\mathcal{F}_{0}=\{F\in\binom{[n]}{3}:\{w_{1},w_{2}\}\subseteq F\}$, then since $\mathcal{F}_{1}$ is a star and $\mathcal{F}$ is intersecting, then $\mathcal{F}$ must be a sub-family of $\mathcal{M}_{3}$, a contradiction to the assumption. Therefore $\mathcal{F}_{0}$ is not a sunflower with $2$ common elements, then by~\cref{claim:StabilityForS}, $|\mathcal{F}|\le\binom{n-1}{2}-|\mathcal{S}|+|\mathcal{F}_{0}|\le\binom{n-1}{2}-\binom{n-4}{2}-\binom{n-5}{1}-\binom{n-6}{1}-1+|\mathcal{F}_{0}|$, in particular, the equality holds only if $|\mathcal{C}_{3}|=1$. Then it suffices to consider the case $|\mathcal{F}_{0}|=n-3$, otherwise $|\mathcal{F}|\le 2n-3$. Since $|\mathcal{F}_{0}|=n-3>3$, there exists some $F_{3}\in\mathcal{F}_{0}$ such that $|F_{1}\setminus F_{3}|=2$ or $|F_{2}\setminus F_{3}|=2$. Suppose that former case occurs, set $F_{1}\setminus F_{3}:=\{a_{1},a_{2}\}$, and set $b\in F_{2}\setminus F_{3}$, then $\{a_{1},b\},\{a_{2},b\}\subseteq \mathcal{C}_{3}$, which implies that $|\mathcal{C}_{3}|\ge 2$, combining with the proof in~\cref{claim:StabilityForS}, we can see $|\mathcal{F}|\le\binom{n-1}{2}-|\mathcal{S}|+n-3\le\binom{n-1}{2}-\binom{n-4}{2}-\binom{n-5}{1}-\binom{n-6}{1}-2+(n-3)= 2n-3$.
    \item When $|\mathcal{F}_{0}|\ge n-2$, we first have the following claim.
    \begin{claim}\label{claim:NOcommon}
        $\bigcap_{F\in\mathcal{F}_{0}}F=\emptyset$.
    \end{claim}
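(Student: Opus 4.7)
My plan is to argue by contradiction. I will suppose that some $q \in [n]$ lies in every $F \in \mathcal{F}_0$ (noting that $q \neq p$ since no member of $\mathcal{F}_0$ contains $p$) and derive a contradiction by encoding the situation as two cross-intersecting families of $2$-subsets on a small ground set. Set $T := [n] \setminus \{p, q\}$, so $|T| = n - 2$. Every $F \in \mathcal{F}_0$ has the form $\{q\} \cup P_F$ with $P_F \in \binom{T}{2}$, and I will split $\mathcal{F}_1 = \mathcal{F}_1^{q} \sqcup \mathcal{F}_1^{\bar q}$ according to whether $q \in F$; for $G \in \mathcal{F}_1^{\bar q}$ write $G = \{p\} \cup Q_G$ with $Q_G \in \binom{T}{2}$. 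The intersecting property of $\mathcal{F}$ immediately forces the families $\mathcal{P} := \{P_F : F \in \mathcal{F}_0\}$ and $\mathcal{Q} := \{Q_G : G \in \mathcal{F}_1^{\bar q}\}$ to be cross-intersecting on $T$.

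Next I need the size inputs $|\mathcal{P}|, |\mathcal{Q}| \ge n - 2$. The bound $|\mathcal{P}| = |\mathcal{F}_0| \ge n - 2$ is immediate from the hypothesis of this subcase. For the lower bound on $|\mathcal{Q}|$ I exploit that $p$ has the maximum degree. Every member of $\mathcal{F}$ containing $q$ either misses $p$ (and then lies in $\mathcal{F}_0$, by our common-element assumption) or lies in $\mathcal{F}_1^{q}$, so $\deg(q) = |\mathcal{F}_0| + |\mathcal{F}_1^{q}|$. Combining with $|\mathcal{F}_1| = \deg(p) \ge \deg(q)$ yields $|\mathcal{Q}| = |\mathcal{F}_1^{\bar q}| \ge |\mathcal{F}_0| \ge n - 2$.

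The core combinatorial step is to squeeze $\mathcal{Q}$ using the structure of $\mathcal{P}$. A star of $2$-subsets in $T$ has at most $|T| - 1 = n - 3$ edges, so $|\mathcal{P}| \ge n - 2$ rules out $\mathcal{P}$ being a star. Moreover, any intersecting family of $2$-subsets with no common vertex forms a triangle on three vertices, hence has exactly three members, and $|\mathcal{P}| \ge n - 2 \ge 5$ (using $n > 2k = 6$) rules this out as well. Therefore $\mathcal{P}$ contains two disjoint pairs $P_1 = \{a_1, a_2\}$ and $P_2 = \{a_3, a_4\}$. Each $Q \in \mathcal{Q}$ must meet both of them, so it must contain exactly one element of $\{a_1, a_2\}$ and exactly one of $\{a_3, a_4\}$, which leaves only the four candidates $\{a_i, a_j\}$ with $i \in \{1,2\}$, $j \in \{3,4\}$. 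Thus $|\mathcal{Q}| \le 4$, contradicting $|\mathcal{Q}| \ge n - 2 \ge 5$.

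The main obstacle I anticipate is identifying the correct cross-intersecting encoding and using the maximum-degree assumption on $p$ to boost the lower bound on $|\mathcal{Q}|$ to $n - 2$. Once both $\mathcal{P}$ and $\mathcal{Q}$ have size at least $n - 2$, the elementary classification of $2$-uniform intersecting families (star or triangle) forces the disjoint-pair extraction, and the pigeonhole step on $\mathcal{Q}$ finishes the argument cleanly.
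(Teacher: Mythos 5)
Your proof is correct, and it reaches the contradiction by a genuinely different route from the paper. The paper also assumes a common element $g$ of $\mathcal{F}_0$ and also ultimately compares $\deg(g)$ with $\deg(p)$, but it gets there by a hands-on case analysis: it fixes an arbitrary $A=\{a_1,a_2,p\}\in\mathcal{F}_{1,g}$ and enumerates the possible shapes of other members of $\mathcal{F}_{1,g}$ and of $\mathcal{F}_0$ to conclude $|\mathcal{F}_{1,g}|\le 2$ (resp.\ small), whence $\deg(g)=|\mathcal{F}_0|+|\mathcal{F}_1|-|\mathcal{F}_{1,g}|>|\mathcal{F}_1|=\deg(p)$. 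You instead package the same information as a pair of cross-intersecting $2$-uniform families $\mathcal{P},\mathcal{Q}$ on $[n]\setminus\{p,q\}$, use the maximum-degree inequality \emph{up front} to get $|\mathcal{Q}|=|\mathcal{F}_1^{\bar q}|\ge|\mathcal{F}_0|\ge n-2$, and then observe that since a $2$-uniform family with no two disjoint members is a star (at most $n-3$ edges) or a triangle (at most $3$ edges), the bound $|\mathcal{P}|\ge n-2\ge 5$ forces a matching $P_1,P_2$ of size two in $\mathcal{P}$; every $Q\in\mathcal{Q}$ must then pick one point from each of $P_1$ and $P_2$, so $|\mathcal{Q}|\le 4<n-2$. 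All the intermediate steps check out: the injectivity of $F\mapsto F\setminus\{q\}$ and $G\mapsto G\setminus\{p\}$, the cross-intersection (since $p\notin F$ and $q\notin G$), and the degree bookkeeping $\deg(q)=|\mathcal{F}_0|+|\mathcal{F}_1^{q}|$ all hold. What your version buys is a shorter, more conceptual argument that avoids the paper's delicate enumeration of configurations and would generalize more readily; what the paper's version buys is slightly more explicit structural information about $\mathcal{F}_{1,g}$ along the way, though that information is not used elsewhere.
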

    \begin{poc}
    Suppose that $g\in \bigcap\limits_{F\in\mathcal{F}_{0}}F$, then we consider the sub-family $ \mathcal{F}_{1,g}:=\{F\in\mathcal{F}_{1}:g\notin F\}$ of $\mathcal{F}_{1}$. Pick arbitrary set $A\in \mathcal{F}_{1,g}$, denoted by $A=\{a_{1},a_{2},p\}$. Suppose that there exists some $a_{3}\neq g$ such that $\{a_{1},a_{3},p\}\in\mathcal{F}_{1,g}$, then the subset in $\mathcal{F}_{0}$ that contains $a_{2}$ but does not contain $a_{1}$ must be $\{a_{2},a_{3},g\}$. Then $\mathcal{F}_{0}=\{a_{2},a_{3},g\}\cup \bigg(\bigcup\limits_{a\in [n]\setminus\{p,a_{1},g\}}\{a,a_{1},g\}\bigg)$, which implies $\mathcal{F}_{1,g}$ must be $\{a_{1},a_{3},p\}\cup\{a_{1},a_{2},p\}$. By symmetry, we can conclude that in this case, we have $|\mathcal{F}_{1,g}|\le 2$, which implies that the degree of $g$ is larger than the degree of $p$, a contradiction. If there is no $a_{3}$ such that $\{a_{1},a_{3},p\}\in\mathcal{F}_{1,g}$ or $\{a_{2},a_{3},p\}\in\mathcal{F}_{1,g}$, then either $|\mathcal{F}_{1,g}|\le 1$, or there exist some $b_{1},b_{2}$ with $\{b_{1},b_{2}\}\cap \{a_{1},a_{2}\}=\emptyset$ such that $\{b_{1},b_{2},p\}\in\mathcal{F}_{1,g}$. However, this implies that $\mathcal{F}_{0}\subseteq\{a_{1},b_{1},p\}\cup \{a_{1},b_{2},p\}\cup\{a_{2},b_{1},p\}\cup \{a_{2},b_{2},p\}$, which is a contradiction to $|\mathcal{F}_{0}|\ge n-2>4$. 
 \end{poc}
 Now let $q$ be the element attaining the maximum degree of $\mathcal{F}_{0}$, and denote $\mathcal{F}_{0,q}=\{F\in\mathcal{F}_{0},q\in F\}$ and $\mathcal{F}_{0,0}=\{F\in\mathcal{F}_{0},q\notin F\}$. By~\cref{claim:NOcommon}, $|\mathcal{F}_{0,0}|>0$. Obviously we have $|\mathcal{F}_{0,q}|\ge 2$, otherwise $|\mathcal{F}_{0,0}|=0$, a contradiction. Moreover if $|\mathcal{F}_{0,q}|=2$, then $3|\mathcal{F}_{0}|\le \sum\limits_{v\in [n]\setminus\{p\}}|\mathcal{F}_{0,q}|\le 2(n-1)$, which implies $|\mathcal{F}_{0}|\le\frac{2(n-1)}{3}<n-2$ when $n\ge 7$, a contradiction to $|\mathcal{F}_{0}|\ge n-2$. Then we divide our argument according to $|\mathcal{F}_{0,q}|$: 
 \begin{Case}
    \item If $|\mathcal{F}_{0,q}|=3$, we arbitrarily pick a set $A:=\{a_{1},a_{2},a_{3}\}\in\mathcal{F}_{0,0}$, denote $\mathcal{F}_{0,q}$ to be $A_{1}\cup A_{2}\cup A_{3}$, observe that the sets in $\mathcal{F}_{1,q}=\{F\in\mathcal{F}_{1},q\notin F\}$ are of the form $\{p,a_{i},*\}$ for some $i\in [3]$. If there exists some $a_{i}$ appearing exactly twice in $\mathcal{F}_{0,q}$, assume $a_{i}=a_{1}$ by symmetry, then we denote $A_{1}=\{q,a_{1},b_{1}\}$, $A_{2}=\{q,a_{1},b_{2}\}$ and $A_{3}=\{q,a_{2},b_{3}\}$, in particular, $b_{3}\neq a_{1}$. Then $\{p,a_{1},*\}$ can only be $\{p,a_{1},a_{2}\}$ or $\{p,a_{1},b_{3}\}$, and $\{p,a_{2},*\}$ can only be $\{p,a_{1},a_{2}\}$. We then consider the possibilities of $\{p,a_{3},*\}$.
    \begin{itemize}
        \item If $b_{1}=a_{3}$ or $b_{2}=a_{3}$, without loss of generality, assume $b_{1}=a_{3}$, then if $b_{3}=a_{3}$, then $\{p,a_{3},*\}$ has to be $\{p,a_{3},a_{1}\}$ or $\{p,a_{3},b_{2}\}$, in this case $\{p,a_{3},a_{1}\}=\{p,a_{1},b_{3}\}$, which implies $|\mathcal{F}_{1,q}|\le 3$. If $b_{3}\neq a_{3}$, then the element $*$ of $\{p,a_{3},*\}$ belongs to $A_{2}\cap A_{3}$, which implies that either $\{p,a_{3},*\}=\{p,a_{3},b_{2}\}$ or $\{p,a_{3},*\}=\emptyset$. Therefore, in this case we have $|\mathcal{F}_{1,q}|\le 3$.
        \item If $a_{3}\notin \{b_{1},b_{2}\}$, then if $b_{3}=a_{3}$, then the element $*$ of $\{p,a_{3},*\}$ belongs to $A_{1}\cap A_{2}$, which yields that $\{p,a_{3},*\}$ has to be $\{p,a_{3},a_{1}\}$. Therefore, in this case we have $|\mathcal{F}_{1,q}|\le 3$. If $b_{3}\neq a_{3}$, it is easy to see that $\{p,a_{3},*\}=\emptyset$ or $\{p,a_{3},*\}=\{p,a_{3},b_{1}\}$ when $b_{1}=b_{2}=b_{3}$, which yields that $|\mathcal{F}_{1,q}|\le 3$ in this case.
    \end{itemize}
In all, we have $|\mathcal{F}_{1,q}|\le 3$. If each $a_{i}$ appears exactly once in $\mathcal{F}_{0,q}$, then we denote $A_{1}=\{q,a_{1},b_{1}\}$, $A_{2}=\{q,a_{2},b_{2}\}$ and $A_{3}=\{q,a_{3},b_{3}\}$ with $\{b_{1},b_{2},b_{3}\}\cap\{a_{1},a_{2},a_{3}\}=\emptyset$. Observe that if $b_{1},b_{2},b_{3}$ are pairwise distinct, then $|\mathcal{F}_{1,q}|=0$. By symmetry if $b_{1}=b_{2}\neq b_{3}$, then $\mathcal{F}_{1,q}$ can only consist of $\{p,a_{3},b_{1}\}$. In particular, if $b_{1}=b_{2}=b_{3}$, then $\mathcal{F}_{1,q}\subseteq\{p,a_{1},b_{1}\} \cup \{p,a_{2},b_{1}\} \cup \{p,a_{3},b_{1} \}$. Therefore, $|\mathcal{F}_{1,q}|\le 3$. Moreover, we can see $\mathcal{F}_{1}\setminus\mathcal{F}_{1,q}\subseteq \{p,q,a_{1}\}\cup \{p,q,a_{2}\}\cup \{p,q,a_{3}\}$, which together implies that $|\mathcal{F}_{1}|\le 6$. Note that $3|\mathcal{F}_{0}|\le \sum\limits_{v\in [n]\setminus\{p\}}|\mathcal{F}_{0,q}|\le 3(n-1)$, therefore, $|\mathcal{F}|= |\mathcal{F}_{0}|+|\mathcal{F}_{1}|\le n+5\le 2n-3$ when $n\ge 8$. In particular, when $n=7$, if $|\mathcal{F}_{0,0}|=1$, then $|\mathcal{F}|\le 1+3+6=10\le 11$, and if $|\mathcal{F}_{0,0}|\ge 2$, then $|\mathcal{F}_{1}\setminus \mathcal{F}_{1,q}|\le 2$, which implies that $|\mathcal{F}_{1}|\le 5$, then $|\mathcal{F}|\le|\mathcal{F}_{0}|+5\le 6+5=11<12$. 
    \item If $|\mathcal{F}_{0,q}|\ge 4$, then we directly apply the argument in \textbf{Subsubcase 1.3.1}, we can see $|\mathcal{F}_{1,q}|\le 3$. Then the degree of element $q$ is larger than the degree of element $p$, a contradiction. 
 \end{Case}

 In all, when $|\mathcal{F}_{0}|\ge n-2$, $|\mathcal{F}|\le 2n-3$.
 
\end{Case}
       
Thus when $k=3$, $|\mathcal{F}|\le 2n-2$, the equality holds if and only if $\mathcal{F}$ is isomorphic to $\mathcal{M}_{3,2}$.

        \item When $k=4$, by~\cref{claim:SuanTMD}, $h(2)=f(2)=f(n-4)$, then $|\mathcal{F}|\le \binom{n-1}{3}-h(2)= \binom{n-1}{3}-\binom{n-5}{3}-\binom{n-6}{2}+2$. Then there are different types of extremal configurations.
        \begin{Case}
       \item When $\mathcal{F}_{0}=\{F_{1},F_{2}\}$, by~\cref{claim:StabilityForS}, $|F_{1}\cap F_{2}|=3$. We set $F_{1}\cap F_{2}=\{a_{1},a_{2},a_{3}\}$, $F_{1}\setminus F_{2}=\{b_{1}\}$ and $F_{2}\setminus F_{1}=\{b_{2}\}$. Then for any $F\in\mathcal{F}_{1}$, if $F\cap \{a_{1},a_{2},a_{3}\}=\emptyset$, then $\{b_{1},b_{2}\}\subseteq F$, therefore, $\mathcal{F}_{1}=\{G\in \binom{[n]}{4}:\{b_{1},b_{2},p\}\in G\}\cup \{G\in \binom{[n]}{4}:p\in G,G\cap \{a_{1},a_{2},a_{3}\}\neq\emptyset\}$. Moreover, $\mathcal{F}_{0}=\{G\in\binom{[n]}{4}:\{a_{1},a_{2},a_{3}\}\subseteq G,G\cap\{b_{1},b_{2}\}\neq\emptyset\}$. Therefore, $\mathcal{F}$ is isomorphic to $\mathcal{M}_{k,2}$. 
       \item   $|\mathcal{F}_{0}|=n-4\ge 4$, $d_{\textup{max}}(n-4)\le \binom{n-1}{3}-\sum\limits_{i=1}^{4}\binom{n-4-i}{4-i}=d_{\textup{max}}(\mathcal{M}_{4})$, then by~\cref{thm:Frankl}, $|\mathcal{F}|\le|\mathcal{M}_{4}|$. Moreover, the equality holds if and only if $\mathcal{F}$ is either isomorphic to $\mathcal{M}_{3}$, or isomorphic to $\mathcal{M}_{4}$. Furthermore, by checking the structure of $\mathcal{M}_{3}$, we can see $\mathcal{F}$ has to be isomorphic to $\mathcal{M}_{4}$ in this case.
       \end{Case}
        \item When $k\ge 5$, by~\cref{claim:SuanTMD}, $h(2)=f(2)$, then $|\mathcal{F}|\le \binom{n-1}{k-1}-h(2)= \binom{n-1}{k-1}-\binom{n-k-1}{k-1}-\binom{n-k-2}{k-2}+2$. Using the almost identical argument as that in Subcase 2.1, we can show the above equality holds if and only if $\mathcal{F}$ is isomorphic to $\mathcal{M}_{k,2}$, we omit the repeated details here. 
    \end{Case}  
    In all, when $k=4$, $|\mathcal{F}|\le\binom{n-1}{3}-\binom{n-5}{3}-\binom{n-6}{2}+2$, and the equality holds if and only if $\mathcal{F}$ is isomorphic to $\mathcal{M}_{4,2}, \mathcal{M}_3$ or $\mathcal{M}_4$. When $k\ge 5$ or $k=3$, we have $|\mathcal{F}|\le\binom{n-1}{k-1}-\binom{n-k-1}{k-1}-\binom{n-k-2}{k-2}+2$, and the equality holds if and only if $\mathcal{F}$ is isomorphic to $\mathcal{M}_{k,2}$. This finishes the proof.
    \end{document}